\def\@settitle{\begin{center}%
  \normalfont\Large\bfseries
  \@title 
  \end{center}%
}
\newtheorem{theorem}{Theorem}[section]
\newtheorem{corollary}[theorem]{Corollary}
\newtheorem{lemma}[theorem]{Lemma}
\newtheorem{remark}[theorem]{Remark}
\def\diag{{\rm diag}\,}
\def\Span{\mathop{\rm Span}\,}
\def\Re{{\rm Re}}
\def\IC{{\mathbb{C}}}
\def\IR{{\mathbb{R}}}
\def\RR{\mathbb{R}}
\def\KK{{\mathbb{K}}}
\def\HH{{\mathbb{H}}}
\def\FF{{\mathbb F}}
\def\CC{{\mathbb C}}
\def\Span{\mathop{\mathrm{Span}}}
\def\sgno{\mathop{\mathrm{sgn}_0}}
\def\A{{\mathcal A}}
\def\IF{{\mathbb F}}
\def\RR{{\mathbb R}}
\begin{document}
\openup 1\jot

\title[Classification of abelian finite-dimensional $C^*$-algebras]{Classification of abelian finite-dimensional $C^*$-algebras by orthogonality}

\author[B. Kuzma]{Bojan Kuzma}
\address{University of Primorska, Glagolja\v{s}ka 8, SI-6000 Koper, Slovenia, and
Institute of Mathematics, Physics, and Mechanics, Jadranska 19, SI-1000 Ljubljana, Slovenia.}
 \email{bojan.kuzma@upr.si}

\author[S. Singla]{Sushil Singla}
\address{Department of Mathematics, Indian Institute of Science, Bengaluru 560012, India, and University of Primorska, Glagolja\v{s}ka 8, SI-6000 Koper, Slovenia,}
 \email{ss774@snu.edu.in}

\keywords{Real $C^*$-algebras; finite-dimensional abelian $C^*$-algebra; Banach space; non-linear classification; projections in von-Neumann algebras}
\subjclass{Primary 46L05; Secondary 46B20, 46B80}

\begin{abstract}
The main goal of the article is to prove  that if $\mathcal A_1$ and $\mathcal A_2$ are Birkhoff-James isomorphic  $C^*$-algebras over the fields $\mathbb F_1$ and $\mathbb F_2$, respectively and if $\mathcal A_1$ finite-dimensional, abelian of  dimension greater than one, then $\mathbb F_1=\mathbb F_2$ and $\mathcal A_1$ and $\mathcal A_2$ are (isometrically) $\ast$-isomorphic $C^*$-algebras. Furthermore, it is also proved that for a finite-dimensional $C^*$-algebra $\A$, we have $\mathcal L_{\A}^\bot$ is the sum of  minimal ideals which are not skew-fields and $\mathcal L_{\A}^{\bot\bot}$ is the  sum of minimal ideals which are skew-fields, where $\mathcal L_\A$ denotes the set of all left-symmetric elements in $\mathcal A$ and for any subset $\mathcal S\subseteq \mathcal A$, the set $\mathcal S^\bot$ represents the set of all elements of $\mathcal A$ which are Birkhoff-James orthogonal to $\mathcal S$. A procedure to extract the   minimal ideals which are (commutative) fields is also given. 
 \end{abstract}

\maketitle

\section{Introduction}

By Gelfand transform every unital abelian complex $C^*$-algebra $\mathcal A$ is $\ast$-isomorphic to $C(X)$, the space of complex-valued continuous functions on some compact Hausdorff space. This translates the study of algebraic properties 
 to the study of topological properties (and vice-versa):
 $C(X)$ is $\ast$-isomorphic to $C(Y)$ if and only if $X$ and $Y$ are homeomorphic topological spaces.  
Formally, Gelfand transform is a contravariant equivalence between the category of unital abelian $C^*$-algebras and the category of the space of compact Hausdorff spaces (see, e.g., \cite{blackadar, conway, goodearl} for more information). 

Recently Tanaka \cite[Theorem 3.5, Corollary 3.6]{tanaka3} showed that the same can be achieved by studying the geometrical properties rather than the topological ones: He characterized abelian complex $C^*$-algebras among all  complex $C^*$-algebras 
 by using only  the underlying geometric structure. Moreover, he showed that two complex abelian $C^*$-algebras are $\ast$-isomorphic if and only if their geometric structures are homeomorphic.

The geometric structure   was initially defined in terms of \textit{Birkhoff-James orthogonality} (see~\cite[Definition 3.4]{tanaka1}). 
 Figuratively speaking, 
 suppose we obtain a cast of the closed unit ball of the $C^\ast$-algebra norm. We are allowed to examine it with a sufficiently long, infinitesimally thin needle by placing it tangentially in various directions onto the unit sphere of the norm, then translating it parallelly   to the center of the ball and  examining the  points which the translated needle cuts out from the boundary. If $x$ is the touching point of the needle and $y$ is the cut-out point of the translated needle, then the tangentiality of the needle at point $x$ in direction $y$ is equivalent to $\|x+\lambda y\|\ge \|x\|$ for each scalar $\lambda$, that is, to Birkhoff-James orthogonality of $x$ and $y$ (for complex $C^\ast$-algebras, the probing needle has two real dimensions). The geometric structure was defined in terms of maximal faces of  $C^*$-norm's unit ball and requires no knowledge of algebraic operations. 
 Following the approach outlined above we, in \cite[Theorem 2.1]{simple}, 
 completely classified  
 the objects in the categories of  real or complex
  finite-dimensional simple $C^*$-algebras by using only the relation of Birkhoff-James orthogonality, that is, by relying only on the norm structure alone. Another important aspect of our work was that we worked with real as well as complex $C^*$-algebras simultaneously and even gave a procedure to characterize the underlying field when the dimension of space is greater than one. The theory of real $C^*$-algebras is similar to complex $C^*$-algebras, see \cite{Con01, goodearl, Li03, Sch93}  (and~\cite{Rosenberg} for a review of their applications) though  being able to characterize the underlying field of a  given  real or complex $C^*$-algebra, from Birkhoff-James orthogonality alone, was still  a bit surprising.

In this article we continue our work in the categories of finite-dimensional real or complex $C^*$-algebras and characterize \textit{(pseudo-)abelian $C^*$-algebras} together with the underlying fields when the dimension of $C^*$-algebra is greater than one. A few notations are in order. In the sequel, $\mathcal A$ will stand for a finite-dimensional $C^*$-algebra over the field $\mathbb F\in\{\mathbb R, \mathbb C\}$. We will denote the matrix block decomposition of a complex $C^*$-algebra $\mathcal A$ by
\begin{equation}\label{eq:C*(complex)}
  \mathcal M_{n_1}(\mathbb C)\oplus\dots\oplus\mathcal M_{n_\ell}(\mathbb C)
\end{equation}
 for some positive integers $n_1\leq \dots\leq n_\ell$. Similarly, the matrix block decomposition of a real $C^*$-algebra $\mathcal A$ will be denoted by
 \begin{equation}\label{eq:C*(real)}
\mathcal M_{n_1}(\mathbb K_1)\oplus\dots\oplus\mathcal M_{n_\ell}(\mathbb K_\ell)
\end{equation}
 where $\mathbb K_i\in\{\mathbb R, \mathbb C, \mathbb H\}$ with real dimension $d_1,\dots, d_\ell$ such that $d_i\leq\dots\leq d_j$ whenever $n_i=\dots= n_j$ (every~$\A$ has such decomposition, see \cite[Theorem 1.5, Theorem 8.4]{goodearl} for more information). 

Recall that a non-zero two-sided ideal of $\A$ is minimal if it does not properly contain any other non-trivial two-sided ideal. We will refer to  the sum of those minimal ideals of $\A$ which are skew-fields as \textit{a pseudo-abelian summand} of $\A$ and we will refer to the sum of abelian minimal ideals  as an \textit{abelian summand} of $\A$.  If $C^*$-algebra is decomposed as in \eqref{eq:C*(complex)} or \eqref{eq:C*(real)}, then its minimal ideals coincide with  the individual blocks, its pseudo-abelian summand coincides with the sum of all blocks of size one, and its abelian summand coincides with the sum of all blocks of size one over the real or complex field; see, e.g., Wedderburn-Artin theorem~\cite[V.4.6]{grove}. Therefore, if $\mathbb F=\mathbb C$ the pseudo-abelian summand coincides  with the abelian summand of $\A$. Also, if $\A$ is an abelian $C^*$-algebra, then its pseudo-abelian summand equals $\A$. However, in case $\FF=\RR$ the pseudo-abelian summand might contain a quaternionic block in which case it is not abelian. We will say that   $\mathcal A$ is a  pseudo-abelian $C^*$-algebra if it equals to its pseudo-abelian summand. In the case when $\FF=\CC$ this is the same as an abelian $C^*$-algebra. 

Let us  briefly discuss also the definition and basic properties of Birkhoff-James (BJ) orthogonality. For two vectors $v, w$ in a normed space $V$ over a field $\mathbb F$, $v$ is said to be BJ orthogonal to $w$, denoted by $v\perp w$ if $$\|v\|\leq \|v+\lambda w\|\text{ for all }\lambda \in \FF.$$  One can easily see that this relation is homogeneous and that, equivalently, $v\perp w$ if and only if $f_v(w)=0$ for some  supporting functional  $f_v$  at $v$ (that is, $\|f_v\|=1$ and $f_v(v)=\|v\|$), see \cite[Theorem 2.1]{james1} or \cite[Equation 2.1]{guterman}. Therefore, if we define the \textit{outgoing neighborhood} of $v$ by
\begin{equation}\label{eq:perp}
  v^\bot=\{w\in V;\;\; v\perp w\},
\end{equation} then we have \begin{equation}\label{outgoingnbd}v^\bot =
\bigcup\{\mathop{\mathrm{ker}}f;\;\; f\text{ is supporting functional at } v\}.
 \end{equation} A bijective map $\phi\colon V\rightarrow V'$ is  a \textit{BJ isomorphism} between $V$ and $V'$ if $$v\perp w\iff \phi(v)\perp \phi(w)\quad\text{ for all } v,w\in V.$$ Two normed spaces $V$ and $V'$ are  \textit{BJ isomorphic} if there exists a BJ isomorphism between them.

We can now state our main results. Recall that a finite-dimensional $C^*$-algebra is  pseudo-abelian  if it   contains contains only blocks of size one in its matrix block decomposition.
 
\begin{theorem}\label{abelian} Let $\A_1$ and $\A_2$ be two BJ isomorphic $C^*$-algebras over the fields $\FF_1$ and $\FF_2$. If $\A_1$ is finite-dimensional pseudo-abelian $C^*$-algebra with $\dim\A_1\geq 2$, then the following are true:
\begin{enumerate}
    \item  $\FF_1=\FF_2$,
    \item $\A_1$ and $\A_2$ are isomorphic as $C^\ast$-algebras, so in particular, $\A_2$ is pseudo-abelian and $\dim\A_1=\dim\A_2$.
\end{enumerate}
\end{theorem}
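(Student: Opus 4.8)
The plan is to treat everything through the relation $\perp$ alone, transporting structure across the BJ isomorphism $\phi\colon\A_1\to\A_2$, and to extract from $\perp$ a complete isomorphism invariant for pseudo-abelian algebras. I will use repeatedly that $\phi$ commutes with the operations appearing in the statement: since $v\perp w\iff\phi(v)\perp\phi(w)$ and $\phi$ is a bijection, one has $\phi(\mathcal S^\bot)=\phi(\mathcal S)^\bot$ for every subset $\mathcal S$, and $\phi$ carries the left-symmetric set $\mathcal L_{\A_1}$ onto $\mathcal L_{\A_2}$ (left-symmetry, $v\perp w\Rightarrow w\perp v$, is phrased purely via $\perp$). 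First I would establish the qualitative half of (2), that $\A_2$ is again pseudo-abelian. By the structural result recalled above, the pseudo-abelian summand of a finite-dimensional $\A$ equals $\mathcal L_{\A}^{\bot\bot}$. Since $\A_1$ is pseudo-abelian it coincides with its pseudo-abelian summand, so $\mathcal L_{\A_1}^{\bot\bot}=\A_1$; applying $\phi$ and the commutation above gives $\mathcal L_{\A_2}^{\bot\bot}=\phi(\mathcal L_{\A_1}^{\bot\bot})=\phi(\A_1)=\A_2$, whence $\A_2$ equals its own pseudo-abelian summand. Consequently both algebras are, as normed spaces, $\ell^\infty$-direct sums of Euclidean balls: writing a pseudo-abelian algebra in block form $\bigoplus_i\mathbb K_i$, each block $\mathcal M_1(\mathbb K_i)=\mathbb K_i$ is a Euclidean space of real dimension $d_i\in\{1,2,4\}$ and the ambient norm is the maximum of the block norms.

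Next I would recover the block data from $\perp$. Smoothness is $\perp$-recoverable (a sphere point $v$ is smooth iff $v^\bot$ is inclusion-minimal among all outgoing neighborhoods), and the maximal faces of the unit ball are likewise encoded in $\perp$ as in Tanaka's geometric structure, so $\phi$ carries maximal faces of $\A_1$ bijectively to those of $\A_2$. In the sum $\bigoplus_i\mathbb K_i$ a maximal face has the form $\{u\}\times\prod_{i\ne j}B(\mathbb K_i)$ for a block index $j$ and a unit vector $u\in\mathbb K_j$; two such faces are disjoint exactly when they share the same block index, so ``disjoint-or-equal'' is an equivalence relation whose classes biject with the blocks and whose $j$-th class is parametrized by the unit sphere $S^{d_j-1}$ of $\mathbb K_j$. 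Since $\phi$ preserves disjointness and the canonical topology on the geometric structure, it induces a bijection of blocks together with homeomorphisms $S^{d_j-1}\cong S^{d'_{\sigma(j)}-1}$; invariance of the dimension (respectively of the cardinality, which already separates $\mathbb R$-blocks from the rest) of these spheres shows that $\A_1$ and $\A_2$ have the same multiset of block dimensions $\{d_i\}\subseteq\{1,2,4\}$.

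It remains to pin down the field, which is the heart of the matter and the step I expect to be the main obstacle: a complex $\mathbb C^n$ and the real algebra $\mathbb C\oplus\cdots\oplus\mathbb C$ are isometric as real normed spaces and have identical maximal-face and block-dimension data, so no coarse invariant separates them. The resolution is the finer quantity $N(\A):=\#\{v^\bot : v\text{ smooth}\}$, the number of distinct outgoing neighborhoods of smooth points, which is a BJ invariant because $\phi$ permutes smooth points and satisfies $\phi(v^\bot)=\phi(v)^\bot$. A direct computation of supporting functionals shows that over $\mathbb C$ the neighborhood $v^\bot$ of a smooth point depends only on the block carrying its maximum (the complex-linear functional forces $w_j=0$, real codimension $2$), so $N(\mathbb C^n)=n$ is finite; a real $\mathbb C$- or $\mathbb H$-block, by contrast, contributes a whole sphere of distinct real-codimension-one neighborhoods, making $N$ infinite, while a real $\mathbb R$-block contributes a single one. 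Hence $N$ is finite iff $\A$ is complex or is real with only $\mathbb R$-blocks, and combining $N$ with the block-dimension multiset of the previous step separates the remaining possibilities.

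Putting these together, equality of $N$ and of $\{d_i\}$ for $\A_1$ and $\A_2$ forces $\FF_1=\FF_2$ and an exact match of the block multisets: if $\{d_i\}=\{2,\dots,2\}$ then finiteness of $N$ selects the complex field (the real alternative $\mathbb C\oplus\cdots\oplus\mathbb C$ has $N=\infty$), if $N$ is infinite the field is real and $\{d_i\}$ reads off the numbers of $\mathbb R$-, $\mathbb C$-, and $\mathbb H$-blocks, and the all-$\mathbb R$ case is fixed by $\{d_i\}=\{1,\dots,1\}$. Since a pseudo-abelian $C^*$-algebra is determined up to isometric $\ast$-isomorphism by its field and its block multiset, this yields $\A_1\cong\A_2$ and $\dim\A_1=\dim\A_2$, proving (1) and (2). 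The single-block instances are simple $C^*$-algebras and may alternatively be settled directly by \cite{simple}.
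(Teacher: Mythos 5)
Your opening steps are sound and essentially coincide with the paper's: transporting $\mathcal L$, $\mathcal R$ and $(\cdot)^\bot$ through $\phi$, and obtaining pseudo-abelianness of $\A_2$ from $\mathcal L_{\A_2}^{\bot\bot}=\phi(\mathcal L_{\A_1}^{\bot\bot})=\A_2$ is exactly how the paper begins (via Theorem~\ref{leftsymmetric} and Corollary~\ref{abeliancharacterization}). Your invariant $N(\A)=\#\{v^\bot:\ v\ \text{smooth}\}$ is also legitimately $\perp$-expressible (Lemma~\ref{smoothchar}) and plays the same role as the paper's counting of left-/right-symmetric outgoing neighborhoods in Corollary~\ref{lem:Rn-vs-Cn}. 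The gap is in the middle step, where you claim to recover the multiset of block dimensions $\{d_i\}$ from the facial structure. Two problems arise there, one repairable and one fatal as written. First, maximal faces and their disjointness are not obviously encoded in $\perp$: the relation is homogeneous, so it sees rays rather than the unit sphere, and ``two faces intersect'' is a statement about norm-one points that you never translate into orthogonality language; this would need an argument (the paper avoids faces altogether and works only with outgoing neighborhoods). Second, and decisively, the assertion that $\phi$ ``preserves the canonical topology on the geometric structure'' and hence induces homeomorphisms $S^{d_j-1}\cong S^{d'_{\sigma(j)}-1}$ is unjustified. A BJ isomorphism is an arbitrary bijection preserving a single relation; it carries no continuity, and establishing that it induces a homeomorphism of geometric structures is precisely the hard content of Tanaka's theorems (proved there for complex $C^*$-algebras, not available off the shelf for real ones). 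Without the homeomorphism you only get \emph{bijections} between the sphere-parametrized classes, and all spheres $S^{d-1}$ with $d\geq 2$ are equinumerous, so you cannot tell a real $\CC$-block ($d=2$) from an $\HH$-block ($d=4$).

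This is not a cosmetic issue: the invariants that survive in your argument (the field, the number of blocks, the number of real blocks read off from the finite classes, and finiteness of $N$) fail to separate $\RR\oplus\CC\oplus\CC$ from $\RR\oplus\CC\oplus\HH$ --- both are real, have three blocks, one real block, and infinite $N$ --- yet they are not $\ast$-isomorphic, so your proof cannot conclude. The paper closes exactly this hole by counting instead of topology: Lemma~\ref{lem:CCandHH} shows that the minimal number of smooth elements whose joint orthogonal complement has only finitely many left-symmetric outgoing neighborhoods is $1$ for a $\CC$-block and $3$ for an $\HH$-block; Lemma~\ref{lem:totalnrblocks} globalizes this to $s=c+3h$, gives the block count $\ell=r+c+h$ as the cardinality in \eqref{090909}, and yields the dimension formula \eqref{dimformula}, $\dim\A=s+\ell$; Lemma~\ref{lem:nr-real-block} recovers $r$. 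Since $s$, $\ell$ and $r$ are manifestly BJ invariants, the three linear relations determine $(r,c,h)$, which is what the paper's proof of Theorem~\ref{abelian} uses. Your argument needs a substitute of this kind --- some $\perp$-expressible \emph{count} that distinguishes $d=2$ from $d=4$ --- in place of the unproven topological claim.
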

It is immediate that if, in the Theorem~\ref{abelian}, $\mathcal A_1$ is a finite-dimensional abelian $C^*$-algebra, then $\mathcal A_2$ is also abelian. We further remark that  BJ orthogonality alone cannot  determine the underlying field in  one-dimensional $C^*$-algebras because the real $C^*$-algebra $\mathcal M_1(\mathbb R)=\RR$ and the complex $C^*$-algebra $\mathcal M_1(\mathbb C)=\CC$ are BJ isomorphic; see~\cite[Example 2.2]{simple}.

The above theorem can be seen as a partial extension of  a recent result  \cite[Corollary 3.6]{tanaka3} which Tanaka proved for complex $C^*$-algebras: if two complex $C^*$-algebras $\A_1$ and $\A_2$ are BJ isomorphic and one of them is abelian, then they are isomorphic as $C^\ast$-algebras. Within Theorem~\ref{leftsymmetric} below we will  further generalize Theorem~\ref{abelian} to include also the possibility when $\A_1,\A_2$ are  BJ isomorphic but not pseudo-abelian; when combined, the two theorems  imply that the pseudo-abelian summands of  $
\A_1$ and of $\A_2$ are isomorphic as $C^\ast$-algebras  (provided $\A_1,\A_2$ are not one-dimensional). The characterization is based on the notion of left-symmetricity (see~\cite{Komuro-Saito-Tanaka, Sain-Ghosh-Paul} and also~\cite{rightsymmetricturnsek}). A vector $v$ in a normed space $V$ is  \textit{left-symmetric}  if $$(v\perp w)\implies(w\perp v),$$ and is  \textit{right-symmetric} if $(w\perp v)\implies (v\perp w)$. By using the outgoing neigborhood defined within~\eqref{eq:perp} and the \textit{incoming neighborhood} ${}^\bot v:=\{w\in V;\;\;w\perp v\}$ it is easily seen that $v$ is left-symmetric if and only if
$$v^\bot\subseteq{}^\bot v,$$ and is right-symmetric if and only if the reversed inclusion holds. For a subset $\mathcal S$ of $V$, we will use the notation $\mathcal L_{\mathcal S}$ for the set of all left-symmetric vectors relative to $\mathcal S$, i.e.
$$\mathcal L_{\mathcal S}:=\{v\in\mathcal S;\;\;v^\bot	\cap {\mathcal S}\subseteq {}^\bot v\cap {\mathcal S}\}.$$
In particular, if $\mathcal S=V$, then $\mathcal L_{V}$ is the set of all left-symmetric vectors. We will also use the notations $\mathcal L_{\mathcal S}^\bot:=\bigcap_{v\in {\mathcal L}_{\mathcal S}}v^\bot$ and $\mathcal L_{\mathcal S}^{\bot\bot}:=\bigcap_{v\in {\mathcal L}_{\mathcal S}^\bot} v^\bot$.

Given a finite-dimensional $C^*$-algebra $\mathcal A$, we will call  the sum of all its minimal ideals which are not skew-fields (that is, the sum of all its blocks of sizes bigger than one) to be \textit{the nonpseudo-abelian summand} of $\A$. In particular, a finite-dimensioal $C^*$-algebra is a sum of its nonpseudo-abelian and its pseudo-abelian summands.
\begin{theorem}\label{leftsymmetric} Let $\mathcal A$ be a finite-dimensional $C^*$-algebra
 over field $\mathbb F$. Then, $\mathcal L_{\A}^\bot$ is the nonpseudo-abelian summand  and $\mathcal L_{\A}^{\bot\bot}$ is the pseudo-abelian summand of $\A$.
\end{theorem}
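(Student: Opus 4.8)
The plan is to first pin down the left-symmetric elements $\mathcal L_\A$ just well enough to compute the two orthocomplements, exploiting that the $C^\ast$-norm on $\A=\mathcal M_{n_1}(\mathbb K_1)\oplus\cdots\oplus\mathcal M_{n_\ell}(\mathbb K_\ell)$ is the $\ell^\infty$-direct-sum norm $\|(a_i)_i\|=\max_i\|a_i\|$. Write $P$ for the pseudo-abelian summand (the sum of the size-one blocks) and $N$ for the nonpseudo-abelian summand (the sum of the blocks of size $\ge 2$), so that $\A=P\oplus N$. The first thing I would record is the description of supporting functionals in such an $\ell^\infty$-sum: a norm-one functional supports $a=(a_i)_i$ exactly when it is a convex combination, over the index set $J=\{i:\|a_i\|=\|a\|\}$ of norm-maximal blocks, of (scaled) supporting functionals of the individual $a_i$. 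Two consequences drive everything: if $i_0\in J$ then $a$ has a supporting functional supported on block $i_0$ alone; and if $b$ is supported on a single block $j$, then $b\perp c$ reduces to the within-block orthogonality $b_j\perp c_j$. I would also record that orthogonality of two elements with disjoint block supports is automatic, since adding a scalar multiple of one to the other can only enlarge the max-norm.

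Next I would establish the two inclusions $\bigcup_{n_i=1}(\text{block }i)\subseteq\mathcal L_\A\subseteq P$. For the first, take $a$ supported on a single size-one block $i_0$; since $\mathbb K_{i_0}$ is one-dimensional over $\mathbb F$ (when $\mathbb F=\mathbb C$) or a Euclidean space with symmetric orthogonality (when $\mathbb F=\mathbb R$ and $\mathbb K_{i_0}\in\{\mathbb R,\mathbb C,\mathbb H\}$), a short computation shows $a^\bot=\{b:\ b_{i_0}\perp a_{i_0}\text{ within block }i_0\}$ and that every such $b$ satisfies $b\perp a$, whence $a\in\mathcal L_\A$. For the reverse inclusion I would show that no element with a nonzero component $a_{i_1}$ in a block of size $n_{i_1}\ge 2$ is left-symmetric, splitting on whether block $i_1$ is norm-maximal. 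If $\|a_{i_1}\|<\|a\|$, then $b:=(0,\dots,a_{i_1},\dots,0)$ (supported on $i_1$) satisfies $a\perp b$ automatically, because no supporting functional of $a$ sees block $i_1$, whereas $b\perp a$ would force $a_{i_1}\perp a_{i_1}$; the estimate $\|b-\epsilon a\|<\|b\|$ for small $\epsilon>0$ shows $b\not\perp a$. If instead $\|a_{i_1}\|=\|a\|$, I invoke the fact that a finite-dimensional simple $C^\ast$-algebra of dimension greater than one has no nonzero left-symmetric element to obtain $c$ with $a_{i_1}\perp c$ but $c\not\perp a_{i_1}$ inside $\mathcal M_{n_{i_1}}(\mathbb K_{i_1})$; embedding $c$ into block $i_1$ produces $b$ with $a\perp b$ (via the block-$i_1$ supporting functional of $a$) but $b\not\perp a$. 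Either way $a\notin\mathcal L_\A$, giving $\mathcal L_\A\subseteq P$.

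With this in hand the two orthocomplements fall out. For $\mathcal L_\A^\bot$, intersecting $a^\bot$ over all $a$ supported on a fixed size-one block $i$ yields $\{b:b_i=0\}$ (in the real case one lets $a_i$ range over the block and takes $a_i=b_i$ to kill $b_i$), so $\mathcal L_\A^\bot\subseteq\bigcap_{n_i=1}\{b:b_i=0\}=N$; conversely $\mathcal L_\A\subseteq P$ together with the disjoint-support remark gives $a\perp b$ for every $a\in\mathcal L_\A$ and $b\in N$, so $N\subseteq\mathcal L_\A^\bot$, and hence $\mathcal L_\A^\bot=N$. Finally $\mathcal L_\A^{\bot\bot}=\bigcap_{b\in N}b^\bot$: the disjoint-support remark gives $P\subseteq\mathcal L_\A^{\bot\bot}$, while if $c$ has a nonzero component $c_{i_1}$ in a block of size $\ge 2$, then $b:=(0,\dots,c_{i_1},\dots,0)\in N$ satisfies $b\not\perp c$ (again via $\|b-\epsilon c\|<\|b\|$), so $c\notin\mathcal L_\A^{\bot\bot}$; thus $\mathcal L_\A^{\bot\bot}=P$.

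The main obstacle is the norm-maximal subcase of the inclusion $\mathcal L_\A\subseteq P$. Everything else is an elementary manipulation of the $\ell^\infty$-sum norm together with the disjoint-support principle, but ruling out a left-symmetric element concentrated on a full-sized, norm-maximal matrix block genuinely requires the classification of (the absence of) left-symmetric points in a single simple block $\mathcal M_n(\mathbb K)$ with $n\ge 2$, which I would import from the simple-algebra analysis rather than reprove here.
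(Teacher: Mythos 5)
Your proposal is correct, and at the level of computing the two orthocomplements it coincides with the paper's proof: both arguments confine $\mathcal L_{\A}$ to the size-one blocks, use disjoint-support orthogonality to get $N\subseteq\mathcal L_{\A}^\bot$ and $P\subseteq\mathcal L_{\A}^{\bot\bot}$ (in your notation for the two summands), and use within-block reduction for the reverse inclusions — your estimate $\|b-\epsilon c\|<\|b\|$ is exactly the paper's observation that an element supported on a single block is BJ orthogonal to $c$ only if its block is orthogonal to the corresponding block of $c$. The genuine difference is how the containment $\mathcal L_{\A}\subseteq P$ is obtained. The paper proves the stronger, exact characterization of $\mathcal L_{\A}$ (Lemma~\ref{left}) in a self-contained way: for a norm-maximal block of size at least two it constructs explicit witnesses (first $B_i=e_2^i(e_2^i)^\ast$, then a rank-two perturbation) showing failure of left-symmetry, uniformly for $\KK_i\in\{\RR,\CC,\HH\}$. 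You instead import the single-block statement and lift it to the direct sum via the supporting-functional calculus for $\ell^\infty$-sums; that lifting is correct, and it cleanly separates the single-block problem from the direct-sum bookkeeping. A further economy of your route is that you never need the exact description of $\mathcal L_{\A}$: the sandwich $\bigcup_{n_i=1}(\text{block } i)\subseteq\mathcal L_{\A}\subseteq P$ suffices, whereas the paper first pins down $\mathcal L_{\A}$ completely. What the paper's route buys is self-containedness, in particular for the quaternionic blocks.

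Two caveats. First, the fact you invoke is mis-stated: ``a finite-dimensional simple $C^\ast$-algebra of dimension greater than one has no nonzero left-symmetric element'' is false over $\RR$, because in $\mathcal M_1(\CC)$ and $\mathcal M_1(\HH)$ BJ orthogonality is the symmetric Euclidean relation $\mathrm{Re}(\bar{b}a)=0$, so \emph{every} element there is left-symmetric; the correct hypothesis is matrix size at least two (equivalently, that the simple algebra is not a skew-field). You only apply the fact to blocks with $n_{i_1}\ge 2$, so your argument is unaffected, but the statement must be fixed. Second, this imported fact — especially for $\mathcal M_n(\HH)$ with $n\ge 2$, which is not standard in the literature — is precisely the technical core that the paper proves by hand inside Lemma~\ref{left}; if the source you have in mind does not cover the quaternionic case, your proof is incomplete at exactly that point and you would need to reproduce constructions of the kind the paper uses there.
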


 The set   $\mathcal L_\A^{\bot\bot}$  is therefore
 a $C^*$-algebra and in fact classifies pseudo-abelian finite-dimensional $C^*$-algebras as follows:

\begin{corollary}\label{abeliancharacterization}
   Let $\A$ be a finite-dimensional $C^*$-algebras over $\FF$. Then $\A$ is a pseudo-abelian $C^*$-algebra if and only if $\mathcal L_\A^{\bot\bot}=\A$. 
\end{corollary}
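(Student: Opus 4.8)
The plan is to read this off directly from Theorem~\ref{leftsymmetric} together with the definition of a pseudo-abelian $C^*$-algebra, so that no new geometric work is required. Recall that $\A$ was declared pseudo-abelian exactly when it coincides with its pseudo-abelian summand, i.e.\ when every block in the decomposition \eqref{eq:C*(real)} (or \eqref{eq:C*(complex)}) has size one. Theorem~\ref{leftsymmetric} identifies this summand intrinsically as $\mathcal L_\A^{\bot\bot}$, and the two statements are then tautologically equivalent.

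Concretely, I would argue the two implications separately. For the forward direction, assume $\A$ is pseudo-abelian; by definition $\A$ equals its pseudo-abelian summand, and since Theorem~\ref{leftsymmetric} says the pseudo-abelian summand is precisely $\mathcal L_\A^{\bot\bot}$, we conclude $\mathcal L_\A^{\bot\bot}=\A$. For the reverse direction, assume $\mathcal L_\A^{\bot\bot}=\A$; applying Theorem~\ref{leftsymmetric} again, the pseudo-abelian summand of $\A$ equals $\mathcal L_\A^{\bot\bot}=\A$, whence $\A$ coincides with its pseudo-abelian summand and is therefore pseudo-abelian.

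As a consistency check one may also phrase the equivalence through the complementary summand: since $\A$ decomposes as the sum of its nonpseudo-abelian and pseudo-abelian summands, Theorem~\ref{leftsymmetric} gives $\A=\mathcal L_\A^\bot\oplus\mathcal L_\A^{\bot\bot}$, so that $\mathcal L_\A^{\bot\bot}=\A$ holds if and only if the nonpseudo-abelian summand $\mathcal L_\A^\bot$ vanishes, which is exactly the condition that $\A$ has no blocks of size greater than one. I do not anticipate any genuine obstacle here: all of the substantive content is already carried by Theorem~\ref{leftsymmetric}, and the corollary is essentially a restatement of it against the definition of a pseudo-abelian $C^*$-algebra. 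The only point deserving a line of care is to use the decomposition into the two summands as an equality of the whole algebra, so that the vanishing of one summand really forces the other to be everything.
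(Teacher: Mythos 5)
Your proposal is correct and matches the paper exactly: the paper presents Corollary~\ref{abeliancharacterization} as an immediate consequence of Theorem~\ref{leftsymmetric}, identifying $\mathcal L_\A^{\bot\bot}$ with the pseudo-abelian summand and comparing it with the definition of a pseudo-abelian $C^*$-algebra, which is precisely your two-implication argument. Nothing further is needed.
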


Theorem~\ref{abelian} suggests that BJ orthogonality alone can determine whether a finite-dimensional $C^*$-algebra is abelian and, if not, to extract its abelian summand. We will provide a positive solution to this problem in the Section \ref{section4}, once we  develop the necessary machinery to formalize the procedure that isolates the quaternionic blocks in the pseudo-abelian summand.
 
\begin{remark}
(a) Finite-dimensional complex $C^*$-algebras are von-Neumann algebras (see, e.g., \cite{blackadar} for more on von-Neumann algebras).  Recall that an element $p$ of von-Neumann algebra $\A$ is called a central projection if $p^2=p=p^\ast$ (a projection) and $p$ commutes with all other elements of $\A$. A projection $p\in\A$ is called abelian  if $p\A p$ is commutative. In case of factor von-Neumann algebras $\mathcal M_n(\mathbb C)$, central abelian projections exist only when $n=1$. Thus, the abelian summand equals the complex linear span of abelian central projections. A non-zero projection $p$ is minimal if the only
non-zero projection $q \in\A$  such that $q \le  p$ is $q = p$. Since
$p\A p$ is also a von Neumann algebra with $p$ as an identity, it is easily seen that this is equivalent to the fact that  $p\A p$ is a field. Thus, in case $\FF=\CC$,  Theorem~\ref{leftsymmetric} says that $\mathcal L_{\A}^\bot$ is the complex linear span of minimal non-central projections and $\mathcal L_{\A}^{\bot\bot}$ is the complex linear span of abelian central projections (or abelian summand).

(b) In a related study~\cite[Theorem 3.2]{Komuro-Saito-Tanaka} the authors classified elements, left-symmetric relative to the positive cone of general complex $C^*$ algebras. These  are exactly scalar multiples of minimal projections.
\end{remark}

A proof of Theorems \ref{abelian} and  \ref{leftsymmetric} will be given in Section \ref{sect:prfs}. In Section~\ref{section2} we characterize left-symmetric elements and right-symmetric elements in Lemmas \ref{left} and \ref{right}. In Lemma \ref{right} we prove that right-symmetric elements are exactly scalar multiples of unitaries. This extends \cite[Theorem 2.5]{rightsymmetricturnsek}  to general finite-dimensional $C^*$-algebras. As a consequence, every BJ isomorphism will map the set of  scalar multiples of unitaries to itself. 
Section \ref{section3} is devoted to developing the tools to  prove Theorem \ref{abelian}. In \eqref{dimvslefteq} and \eqref{dimformula}, formulas to find the dimension of an pseudo-abelian $C^*$-algebra $\A$ are provided and in Corollary \ref{lem:Rn-vs-Cn}, a characterization of the underlying field of $\A$ is given, provided the dimension of $\A$ is greater than one. Lemma \ref{lem:totalnrblocks} gives a procedure to find the number of blocks in the matrix block decomposition of an pseudo-abelian $C^*$-algebra. In section \ref{section4} we extract the abelian summand and give a characterization  of abelian $C^*$-algebras in terms of BJ orthogonality.

\section{Symmetricity and smoothness}\label{section2}

Any $A=\bigoplus_{k=1}^\ell A_k\in\bigoplus_{k=1}^\ell\mathcal M_{n_k}(\mathbb K_k)$ acts on a column  vector  $x=\bigoplus_{k=1}^\ell x_k\in\widehat{\mathbb K}=\bigoplus_{k=1}^\ell \mathbb K_k^{n_k}$ by $Ax= \bigoplus_{k=1}^\ell (A_kx_k)$. We let (a row vector) $x^\ast$ be its conjugate transpose. If  needed we will  regard $\KK_k^{n_k}$ as a right-vector space over the  (skew) field $\KK_k$;  the above action, when restricted to a summand~$\KK_k^{n_k}$, then induces a $\KK_k$-linear operator. We regard $\widehat{\mathbb K}$ as a (right) $\FF$-vector space and equip it with a natural $\FF$-valued inner product by $$\Big\langle \bigoplus_{k=1}^\ell x_k, \bigoplus_{k=1}^\ell y_k\Big\rangle_{\FF}=\sum\limits_{k=1}^\ell \langle x_k, y_k\rangle_{\mathbb F}\quad \hbox{where} \quad \langle x_k, y_k\rangle_{\mathbb F}:=\begin{cases}
                                                                                  \mathrm{Re}\,y_k^\ast x_k, & \mbox{if } {\mathbb F}=\RR \\
                                                                                  y_k^\ast x_k, & \mbox{if } {\mathbb F}=\CC
                                                                                \end{cases}.$$
Notice that  $\langle x_k,y_k\rangle_{\FF}=y_k^\ast x_k$ only in complex $C^\ast$-algebras in which case $\KK_k=\CC$ for each $k$ and $\widehat{\mathbb K}=\CC^{n_1+\dots+n_\ell}$. Notice also that
\begin{equation}\label{eq:inner-product-equality}
  \langle x_{k}\lambda,y_k\lambda\rangle_{\FF}=|\lambda|^2\langle x_k,y_k\rangle_{\FF};\qquad \lambda \in\KK_k
\end{equation} which is clear if $\FF=\CC$ and is also clear if $\FF=
\RR$ and $x,y$ belong to $\RR^n$ or $\CC^n$. However, if $x_k,y_k\in\HH^n$ we have $\langle x_k\lambda,y_k\lambda\rangle_{\FF}=\mathrm{Re}\,\overline{\lambda} (y_k^\ast x_k)\lambda$. Here we decompose $y_k^\ast x_k\in\KK_k=\HH$ into its real and purely imaginary part and use that the conjugation $\overline{\lambda} (y_k^\ast x_k)\lambda$ maps purely imaginary part again into purely imaginary part, so  $\mathrm{Re}\,\overline{\lambda} (y_k^\ast x_k)\lambda=\overline{\lambda} \mathrm{Re}\,(y_k^\ast x_k)\lambda=|\lambda|^2\mathrm{Re}\,(y_k^\ast x_k)$.

This inner product defines a norm on $\widehat{\mathbb K}$ and the induced operator norm for $A=\bigoplus_{k=1}^\ell A_k\in\bigoplus_{k=1}^\ell\mathcal M_{n_k}(\mathbb K_k)$ coincides with $C^\ast$-norm and satisfies
 $$\big\|\bigoplus_{k=1}^\ell A_k\big\|=\max\{\|A_k\|;\;\;1\leq k\leq \ell\}.$$ For its computation, we recall the singular value decomposition for $\mathbb {\mathcal M}_n(\mathbb K)$. It states that for $A\in\mathbb {\mathcal M}_n(\mathbb K)$, there exists $\KK$-orthonormal basis $\{x_1, \dots, x_n\}$ and $\{y_1, \dots, y_n\}$  (i.e., $x_i^\ast x_j=y_i^\ast y_j=\delta_{ij}$, Kronecker delta) such that $A=\sum\limits_{i=1}^n\sigma_iy_ix_i^*$, where $\sigma_1\geq\dots\geq \sigma_n\ge0$ are singular values of $A$ arranged in the decreasing order, with $\|A\|=\sigma_1$ (see \cite[Theorem 7.2]{zhang} for the singular value decomposition for $\mathbb {\mathcal M}_n(\mathbb H)$).

For $A=\bigoplus_{k=1}^\ell A_k\in\A$, we define $$M_0(A) =\{x \in \widehat{\mathbb K} ;\;\; \|Ax\|=\|A\|\|x\|\}.$$  We also define $M_0^*(A)=\bigoplus_{k=1}^\ell M_0^*(A_k)$ where $$M_0^*( A_k)=\begin{cases}
    M_0(A_k), & \mbox{ if }\|A_k\|= \|A\| \\
    0_{k}, & \mbox{ if } \|A_k\|<\|A\|
\end{cases}.$$
Notice that $M_0(A_k)$ is a $\mathbb K_k$-subspace of $\mathbb K_k^{n_k}$ (see, e.g.,~\cite[Lemma 3.1]{simple}), hence in particular an $\mathbb F$-subspace, so $M_0^\ast(A)$ is also an $\mathbb F$-subspace of $\widehat{\mathbb K}$. We will prove in the next lemma that $M_0(A)=M_0^\ast(A)$; therefore $M_0(A)$ is also a $\mathbb F$-subspace of $\widehat{\mathbb K}$. We will use $0_{n_k}$ for the zero matrix in $\mathcal M_{n_k}(\KK_k)$.

\begin{lemma}\label{M_0(A)}  Let $\A=\bigoplus_{k=1}^\ell\mathcal M_{n_k}(\mathbb K_k)$ be a $C^*$-algebra and let $A,B\in\A$. Then:
\begin{enumerate}
    \item[(i)]    $B\in A^\bot$ if and only if there exists a normalized vector $x\in M_0(A)$ such that $\langle Ax,Bx\rangle_{\FF} =0$.
    \item[(ii)] If $A =\bigoplus_{k=1}^\ell A_k$ is its decomposition, then $$M_0^*( A_k)\subseteq M_0(A)=M_0^*(A) \subseteq \bigoplus_{k=1}^\ell M_0(A_k).$$
\end{enumerate}
Consequently, if  $\|A_i\|<\max\{\|A_k\|;\;\;1\leq k\leq\ell\}=\|A\|$, then $$A^\bot = \big(A_1\oplus\dots\oplus A_{i-1}\oplus 0_{n_i}\oplus A_{i+1}\oplus\dots\oplus A_\ell\big)^\bot.$$
\end{lemma}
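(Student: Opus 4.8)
The plan is to prove part (ii) first, then part (i), and finally to read off the displayed consequence from the two; I prove (ii) first because the substantive half of (i) needs to know that $M_0(A)$ is an $\FF$-subspace, which is exactly what (ii) (together with the cited fact that each $M_0(A_k)$ is a $\KK_k$-subspace) delivers. For (ii) I would argue by a direct computation. Put $\sigma=\|A\|=\max_k\|A_k\|$ and use the recorded identity $\|Ax\|^2=\sum_{k=1}^\ell\|A_kx_k\|^2$ together with $\|A_kx_k\|\le\|A_k\|\,\|x_k\|\le\sigma\|x_k\|$. A vector $x=\bigoplus_k x_k$ lies in $M_0(A)$ precisely when $\sum_k\|A_kx_k\|^2=\sigma^2\sum_k\|x_k\|^2$, that is, when $\|A_kx_k\|=\sigma\|x_k\|$ for every $k$. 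For a block with $\|A_k\|<\sigma$ this forces $x_k=0$, whereas for a block with $\|A_k\|=\sigma$ it says exactly $x_k\in M_0(A_k)$; this is the description of $M_0^\ast(A)=\bigoplus_k M_0^\ast(A_k)$, so $M_0(A)=M_0^\ast(A)$. The two inclusions $M_0^\ast(A_k)\subseteq M_0(A)$ (viewing $M_0^\ast(A_k)$ in the $k$-th slot) and $M_0^\ast(A)\subseteq\bigoplus_k M_0(A_k)$ then follow from $M_0^\ast(A_k)\subseteq M_0(A_k)$.

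For (i) the ``if'' implication is a direct estimate: if $x\in M_0(A)$ is a unit vector with $\langle Ax,Bx\rangle_\FF=0$, then for every $\lambda\in\FF$
\[
\|(A+\lambda B)x\|^2=\|Ax\|^2+2\Re\!\big(\overline{\lambda}\,\langle Ax,Bx\rangle_\FF\big)+|\lambda|^2\|Bx\|^2=\|A\|^2+|\lambda|^2\|Bx\|^2,
\]
the cross term vanishing by hypothesis, so $\|A+\lambda B\|\ge\|(A+\lambda B)x\|\ge\|A\|$ and $A\perp B$. The ``only if'' implication is a Bhatia--\v{S}emrl type statement and is where the work lies. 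Assuming $A\perp B$, the convex function $\lambda\mapsto\|A+\lambda B\|$ has a global minimum $\|A\|$ at $\lambda=0$. For each scalar direction $\lambda=t\omega$ (with $|\omega|=1$, $t>0$) I would take a unit norm-attaining vector $x_\lambda$ of $A+\lambda B$; from
\[
\|A\|^2\le\|A+\lambda B\|^2=\|Ax_\lambda\|^2+2\Re\!\big(\overline{\lambda}\,\langle Ax_\lambda,Bx_\lambda\rangle_\FF\big)+|\lambda|^2\|Bx_\lambda\|^2
\]
and $\|Ax_\lambda\|\le\|A\|$ one gets $\Re\!\big(\overline{\omega}\,\langle Ax_\lambda,Bx_\lambda\rangle_\FF\big)\ge-\tfrac{t}{2}\|Bx_\lambda\|^2$. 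Passing to a convergent subsequence as $t\to0$ (finite dimension), the limit $x_\omega$ satisfies $\|Ax_\omega\|=\|A\|$, hence $x_\omega\in M_0(A)$, and $\Re\!\big(\overline{\omega}\,\langle Ax_\omega,Bx_\omega\rangle_\FF\big)\ge0$.

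The main obstacle is to upgrade these one-sided, direction-dependent inequalities to an exact vanishing $\langle Ax,Bx\rangle_\FF=0$ for a single $x\in M_0(A)$. When $\FF=\CC$ the values $\langle Ax,Bx\rangle_\CC=x^\ast B^\ast Ax$, for $x$ ranging over the unit sphere of the complex subspace $M_0(A)$, form the numerical range of the compression $\iota^\ast B^\ast A\,\iota$ of $B^\ast A$ to $M_0(A)$ ($\iota$ the inclusion), which is convex by the Toeplitz--Hausdorff theorem; the inequalities above say this range meets every closed half-plane whose boundary passes through the origin, and a separation argument (were it to miss one, it would be strictly separated from $0$, contradicting the inequality for the opposite direction) forces $0$ into it. When $\FF=\RR$ the values $\langle Ax,Bx\rangle_\RR$ are real and depend continuously on $x$ over the connected unit sphere of $M_0(A)$ (the one real-dimensional case being handled by evenness of the map), so the two opposite inequalities and the intermediate value property yield a zero. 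Either way one obtains a unit $x\in M_0(A)$ with $\langle Ax,Bx\rangle_\FF=0$.

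Finally I read off the consequence. Put $A'=A_1\oplus\dots\oplus0_{n_i}\oplus\dots\oplus A_\ell$. As $\|A_i\|<\|A\|$, deleting the $i$-th block leaves the maximum block norm unchanged, so $\|A'\|=\|A\|$, and by (ii) the $i$-th slot is inactive for both $A$ and $A'$; hence $M_0(A')=M_0^\ast(A')=M_0^\ast(A)=M_0(A)$. Every $x$ in this common set has $x_i=0$, so $Ax=A'x$ and thus $\langle Ax,Bx\rangle_\FF=\langle A'x,Bx\rangle_\FF$. Applying the characterization (i) to $A$ and to $A'$ now yields $B\in A^\bot\iff B\in (A')^\bot$, which is the asserted equality.
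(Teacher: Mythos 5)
Your proof is correct, but it takes a genuinely different route from the paper on the substantive part, namely (i). Parts (ii) and the final consequence coincide with the paper's own argument: the same squeeze of $\sum_k\|A_kx_k\|^2$ between $\|A\|^2\|x\|^2$ and itself forces $x_k=0$ on sub-maximal blocks and $x_k\in M_0(A_k)$ on maximal ones, and the consequence is read off from (i) together with $M_0(A)=M_0(A')$ exactly as the paper does. For (i), however, the paper's proof is essentially a citation: since BJ orthogonality depends only on the two-dimensional subspace spanned by $A$ and $B$, it embeds $\A$ as block-diagonal matrices into a single algebra $\mathcal M_{\sum d_kn_k}(\FF)$ (using $\mathcal M_n(\CC)\hookrightarrow\mathcal M_{2n}(\RR)$ and $\mathcal M_n(\HH)\hookrightarrow\mathcal M_{4n}(\RR)$ when $\FF=\RR$) and invokes the Stampfli--Magajna--Bhatia--\v{S}emrl theorem there. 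You instead reprove that theorem in the direct-sum setting: the ``if'' direction by expanding $\|(A+\lambda B)x\|^2$, and the ``only if'' direction by the standard limiting/compactness argument yielding, for each unimodular $\omega\in\FF$, a unit vector $x_\omega\in M_0(A)$ with $\Re\big(\overline{\omega}\,\langle Ax_\omega,Bx_\omega\rangle_\FF\big)\ge 0$, followed by Toeplitz--Hausdorff convexity of the numerical range of the compression of $B^\ast A$ to $M_0(A)$ (complex case), or connectedness of the unit sphere of $M_0(A)$ plus evenness when $\dim_\RR M_0(A)=1$ (real case), to extract an exact zero. This is sound, and you handle the two points where care is genuinely needed: you prove (ii) first so that $M_0(A)$ is known to be an $\FF$-subspace, which your convexity/connectedness step requires, and your evenness remark is not vacuous, since $M_0(A)$ can indeed be one-real-dimensional (a single real block attaining the norm on a line). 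As for what each approach buys: yours is self-contained and makes visible exactly which geometric facts drive the characterization --- convexity of numerical ranges and connectedness of spheres, the same convexity the paper later uses explicitly in the proof of Lemma~\ref{lengthmaximal} --- while the paper's embedding argument is much shorter and delegates those facts to the quoted literature.
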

\begin{proof} For (i) we note that the BJ orthogonality of two vectors $v$ and $w$ in a normed space $V$ depends only on the two-dimensional subspace generated by $v$ and $w$. With this in mind,  if $\FF=\RR$,  consider the embedding of  real $C^*$-algebras $\mathcal M_n(\mathbb C)$ and $\mathcal M_n(\mathbb H)$ into $\mathcal M_{2n}(\mathbb R)$ and $\mathcal M_{4n}(\mathbb R)$, respectively. This way,  a $C^\ast$-algebra $\A=\bigoplus_{k=1}^\ell\mathcal M_{n_k}(\mathbb K_k)$ embeds into $\mathcal M_{\sum d_kn_k}(\mathbb F)$ where $d_k=\dim_{\IR}{\mathbb K_k}$ in case of $\mathbb F=\mathbb R$ and where  $d_k=\dim_{\IC}\KK_k=1$ in case of  $\mathbb F=\mathbb C$  (since $\mathbb K_k=\mathbb C$ when $\IF=\IC$). Then,~(i) follows by Stampfli-Magajna-Bhatia-\v Semrl classification (see \cite[Theorem 1]{bhatia} or, for some historical background, \cite[Proposition 3.2]{simple} and \cite[page 2716]{grover1}) applied on $\mathcal M_{\sum d_kn_k}(\mathbb F)$.

For (ii), we first show $M_0(A)=M_0^\ast(A)$. Let $x=\bigoplus_{k=1}^\ell x_k \in M_0(A)$. Then, $\|Ax\|=\|A\|\|x\|$ and so 
\begin{align*}\|A\|^2\|x\|^2=\|Ax\|^2=\sum\limits_{k=1}^\ell\|A_kx_k\|^2\leq \sum\limits_{k=1}^\ell\|A_k\|^2\|x_k\|^2\leq \max\limits_{1\leq k\leq\ell}\|A_k\|^2\big(\sum\limits_{k=1}^\ell\|x_k\|^2\big)=\|A\|^2\|x\|^2.\end{align*} Now, it implies equalities overall so that $$\|A_kx_k\|^2=\|A_k\|^2\|x_k\|^2=(\max\limits_{1\leq k\leq\ell}\|A_k\|^2)\|x_k\|^2=\|A\|^2\|x_k\|^2\text{ for all }1\leq k\leq \ell.$$ Therefore, if $\|A_i\|<\max\limits_{1\leq k\leq\ell}\|A_k\|$ we have $x_i=0$, while if $\|A_i\|=\max\limits_{1\leq k\leq\ell}\|A_k\|$, we have $x_i\in M_0(A_i)$. It implies $M_0(A)\subseteq M_0^\ast(A)$. Conversely, if $x=\bigoplus_{k=1}^\ell x_k \in M_0^*(A)$, let $\Lambda$ be the collection of all indices $i$ such that $\|A_i\|=\|A\|$. By definition of $M_0^\ast(A)$ we have $x_k=0$ if $k\notin\Lambda$, so that
$$\|Ax\|^2=\sum\limits_{k=1}^\ell\|A_kx_k\|^2=\sum\limits_{i\in \Lambda}\|A_ix_i\|^2 = \|A\|^2\sum\limits_{i\in \Lambda}\|x_i\|^2=\|A\|^2\|x\|^2.$$ This proves $x\in M_0(A)$, hence $M_0^\ast(A)\subseteq M_0(A)$. The other containment in (ii) follows directly from the definitions, while the last statement of the lemma follows from (i) and (ii).
\end{proof}

We  now  begin to   investigate algebraic properties of elements of $\A$ with BJ orthogonality.  Let us record a trivial observation which classifies the $0$ element in terms of BJ orthogonality.  We will tacitly used it in many subsequent lemmas when  claiming that BJ orthogonality alone determines a relevant property:
$$A=0 \Longleftrightarrow A\perp A.$$

\begin{lemma}\label{left} Let $\mathcal A=\mathcal M_{n_1}(\mathbb K_1)\oplus\dots\oplus\mathcal M_{n_\ell}(\mathbb K_\ell)$ with $n_1=\dots=n_p=1$ and $n_{p+1},\dots, n_\ell\ge2$ for some $p\geq 0$. If $p=0$, then $\mathcal A$ has no non-zero left-symmetric elements. If $p\geq 1$, then  the following are equivalent for a non-zero element $A\in \mathcal A$:
\begin{itemize}
    \item[(i)] $A$ is left-symmetric element of $\A$,
    \item[(ii)] The block decomposition of $A$ has only one non-zero entry and it belongs to  $\mathcal M_1(\mathbb K_i)$ for some $i\in [1,p]$.
\end{itemize}
\end{lemma}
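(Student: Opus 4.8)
The plan is to prove the equivalence through two block-structural reductions supplied by Lemma~\ref{M_0(A)}, saving the real work for a single matrix block. First observe that the claim about $p=0$ is the empty case of the implication (i)$\Rightarrow$(ii): once I show that a non-zero left-symmetric $A$ must be supported in a single scalar slot $\mathcal M_1(\KK_i)$ with $i\le p$, the value $p=0$ leaves no room for such $A$. So it suffices to prove (i)$\Leftrightarrow$(ii) for $p\ge1$. The direction (ii)$\Rightarrow$(i) is the easy half. If $A=0\oplus\cdots\oplus\lambda\oplus\cdots\oplus0$ with $\lambda\in\KK_i$ and $n_i=1$, then every element of $M_0(A)$ is supported in slot $i$, the cross terms vanish, and by~\eqref{eq:inner-product-equality} together with the conjugation identity recorded right after it one gets $\langle Au,Bu\rangle_\FF=|u_i|^2\langle\lambda,B_i\rangle_\FF$ and $\langle Bv,Av\rangle_\FF=|v_i|^2\langle B_i,\lambda\rangle_\FF$ for vectors supported in slot $i$. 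The scalar pairing is symmetric, $\langle\lambda,B_i\rangle_\FF=\langle B_i,\lambda\rangle_\FF$, so $A\perp B$ means $\langle\lambda,B_i\rangle_\FF=0$, which forces $\langle Bv,Av\rangle_\FF=0$ for every $v\in M_0(B)$, i.e.\ $B\perp A$. Hence $A$ is left-symmetric.

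The substance is (i)$\Rightarrow$(ii), which I would run in three steps, using Lemma~\ref{M_0(A)}(i)--(ii) and the observation $C=0\iff C\perp C$. Assume $A\ne0$ is left-symmetric, normalised to $\|A\|=1$. \emph{Step 1 (only top blocks survive).} If $A_j\ne0$ but $\|A_j\|<\|A\|$, set $B=0\oplus\cdots\oplus A_j\oplus\cdots\oplus0$ supported in slot $j$. By the last clause of Lemma~\ref{M_0(A)} the $j$-th component of every $x\in M_0(A)$ vanishes, so $Bx=0$ and thus $A\perp B$; yet $M_0(B)$ lives in slot $j$, where $\langle By,Ay\rangle_\FF=\langle A_jy_j,A_jy_j\rangle_\FF\ne0$ since $A_j\not\perp A_j$, so $B\not\perp A$, contradicting left-symmetry. \emph{Step 2 (only one block survives).} If two blocks $A_a,A_b$ are non-zero (now both of norm $1$), take $B=0\oplus\cdots\oplus A_b\oplus\cdots\oplus0$ in slot $b$. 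A unit $x\in M_0^*(A_a)\subseteq M_0(A)$ supported in slot $a$ gives $Bx=0$, so $A\perp B$; but $B\not\perp A$ again because $A_b\not\perp A_b$, a contradiction. Hence $A$ is supported in a single block $A_i$ of norm $1$.

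\emph{Step 3 (the surviving block is scalar).} It remains to show that a single block $\mathcal M_n(\KK)$ with $n\ge2$ carries no non-zero left-symmetric element; applying this to $A_i$ (cross terms again reduce orthogonality in $\A$ to orthogonality inside the block) forces $n_i=1$, i.e.\ $i\le p$, which is~(ii). Since $X\mapsto UXV^\ast$ with $\KK$-unitary $U,V$ is a surjective isometry, hence a BJ-isomorphism preserving left-symmetry, I may invoke the singular value decomposition to assume $A=\diag(1,\sigma_2,\dots,\sigma_n)$ with $1\ge\sigma_2\ge\cdots\ge0$. Now take $B=\bigl(\begin{smallmatrix}0&1\\1&1\end{smallmatrix}\bigr)\oplus0_{n-2}$, a \emph{real} matrix, so the same witness serves all of $\KK\in\{\RR,\CC,\HH\}$. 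Then $e_1\in M_0(A)$ and $\langle Ae_1,Be_1\rangle_\FF=\langle e_1,e_2\rangle_\FF=0$, so $A\perp B$. On the other hand $B$ is smooth with $M_0(B)=y^0\KK$ for the top right singular vector $y^0=(1,\varphi,0,\dots)^\top$, $\varphi=\tfrac{1+\sqrt5}2$, and a direct computation gives $\langle By^0,Ay^0\rangle_\FF=\varphi\bigl(1+\sigma_2(1+\varphi)\bigr)>0$; by~\eqref{eq:inner-product-equality} the same holds for every unit vector of $M_0(B)$, so $B\not\perp A$. This contradiction finishes Step~3.

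I expect Step~3 to be the main obstacle, and within it the smooth case $\dim_\KK M_0(A)=1$ (in particular $A$ of rank one). There a \emph{rank-one} candidate $B$ is forced to have $M_0(B)$ equal to its single right singular vector, and one checks that any such $B$ orthogonal to $A$ is automatically orthogonal back, so no rank-one witness can exist. The way out is to allow a rank-two $B$ whose unique maximal vector $y^0$ has two non-zero coordinates while the $(e_1,e_1)$-entry of $B$, the only entry that $A\perp B$ can see, is zero; the matrix $\bigl(\begin{smallmatrix}0&1\\1&1\end{smallmatrix}\bigr)$ is exactly such a witness, and keeping its entries real lets one matrix cover the real, complex, and quaternionic cases simultaneously.
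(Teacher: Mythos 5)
Your proof is correct, and although it draws on the same toolkit as the paper---Lemma~\ref{M_0(A)}, unitary invariance plus the SVD, and explicit witness matrices---both the organization and the key witness are genuinely different. The paper proceeds in the opposite order: it first analyzes each norm-attaining block, showing with the witness $e_2^i(e_2^i)^\ast$ that such a block must equal $e_1^i(e_1^i)^\ast$, then rules out $n_i>1$ with a second, rank-two witness $(ae_1^i+be_2^i)(ae_1^i+be_2^i)^\ast-\tfrac13(-be_1^i+ae_2^i)(-be_1^i+ae_2^i)^\ast$, and only afterwards eliminates the sub-maximal blocks and the extra scalar entries. You instead first reduce to a single surviving block (your Steps 1--2, which promote the paper's self-orthogonality trick $A_j\not\perp A_j$, used there only for the large sub-maximal blocks, into a uniform tool for all blocks), and then dispose of any block of size $\ge 2$ with the single witness $\bigl(\begin{smallmatrix}0&1\\1&1\end{smallmatrix}\bigr)\oplus 0_{n-2}$, which handles arbitrary singular values $\sigma_2,\dots,\sigma_n$ at once rather than first proving they vanish; this buys a shorter and more uniform argument at the price of the golden-ratio computation, while the paper's staged approach yields the intermediate structural fact $A_i=e_1^i(e_1^i)^\ast$ along the way. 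A further point in favour of your route: in the paper's final step (killing a second non-zero scalar $\alpha_2$) the chosen witness $0\oplus\overline{\alpha_2}\oplus0\oplus\cdots$ actually fails whenever $\mathrm{Re}\bigl(\overline{\alpha_2}^{\,2}\bigr)=0$ (e.g.\ $\alpha_2=e^{i\pi/4}$ in a complex block of a real $C^\ast$-algebra, where $\langle B\gamma,A\gamma\rangle_{\FF}=\mathrm{Re}\bigl(\overline{\alpha_2}^{\,2}\bigr)=0$, so $B\perp A$ after all); your Steps 1--2 take the \emph{un-conjugated} entry $A_j$ itself as the witness, giving $\langle By,Ay\rangle_{\FF}=\|A_jy_j\|^2>0$, and are immune to this issue. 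Two small points of hygiene in your write-up: in (ii)$\Rightarrow$(i), note that the identity $\langle Bv,Av\rangle_{\FF}=|v_i|^2\langle B_i,\lambda\rangle_{\FF}$ holds for \emph{every} $v$, not just those supported in slot $i$ (you need this, since $M_0(B)$ need not live in slot $i$, and it follows because $A$ annihilates all other slots); and for $\FF=\CC$ the scalar pairing is conjugate-symmetric rather than symmetric, though vanishing of one side still forces vanishing of the other, which is all your argument uses.
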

\begin{proof} Let $A$ be a normalized left-symmetric and let $i$ be such that  $\|A_i\|=\|A\|=1$. Without loss of generality,  $A_i=\Sigma_i=\diag(\sigma_1^i,\dots, \sigma_{n_i}^i)$ is already in its singular value decomposition form where $1=\|A_i\|=\sigma_1^i\ge \dots\ge \sigma_{n_i}^i\ge0$. Consider now a matrix $B_i=e_2^i(e_2^i)^\ast\in\mathcal M_{n_i}(\mathbb K_i)$ and let 
 $B=\big(\bigoplus_{k=1}^{i-1}0_{n_k}\big)\oplus B_i\oplus \big(\bigoplus_{k=i+1}^\ell0_{n_k}\big)$.
Notice that $A$ attains its norm on 
 $x=\big(\bigoplus_{k=1}^{i-1}0_k\big)\oplus e_1^i\oplus \big(\bigoplus_{k=i+1}^\ell0_k\big)$
and $$\langle Ax, Bx\rangle_{\FF}= \langle A_ie_1^i,B_ie_1^i\rangle_{\mathbb F}=0,$$ so $A\perp B$ by Lemma \ref{M_0(A)}. Being left-symmetric, this implies $B\perp A$. Note also that $B$ attains its norm only on 
vectors $y\in\big(\bigoplus_{k=1}^{i-1}0_k\big)\oplus e_2^i\mathbb K_i\oplus \big(\bigoplus_{k=i+1}^\ell0_k\big)$,
and maps them into themselves, so $B\perp A$ forces  $$0=\langle By, Ay\rangle_{\FF}=\langle B_ie_2^i,A_ie_2^i\rangle_{\mathbb F}=\sigma_2^i.$$

Similarly, $\sigma_j^i=0$ for all $2\leq j\leq n_i$. Thus, $A_i=\diag(1,0,\dots,0)= e_1^i(e_1^i)^*$. If now $n_i>1$, consider $B_i=(a e_1^i+b e_2^i)(a e_1^i+ be_2^i)^\ast-\tfrac{1}{3}(-b e_1^i+ ae_2^i)(-b e_1^i+ ae_2^i)^\ast$ where $(a,b)=(-1/2,\sqrt{3}/2)$ and 
 $B=\big(\bigoplus_{k=1}^{i-1}0_{n_k}\big)\oplus B_i\oplus \big(\bigoplus_{k=i+1}^\ell0_{n_k}\big)$.
Notice that $A$ attains its norm on
 $x=\big(\bigoplus_{k=1}^{i-1}0_{k}\big)\oplus e_1^i\oplus \big(\bigoplus_{k=i+1}^\ell0_{k}\big)$
and that $$\langle Ax, Bx\rangle_{\FF}=\langle A_ie_1^i,B_ie_1^i\rangle_{\mathbb F}=0,$$
so $A\perp B$. Notice also that $B$ attains its norm only on a multiple of 
 $y=\big(\bigoplus_{k=1}^{i-1}0_{k}\big)\oplus(a e_1^i+ b e_2^i)\oplus \big(\bigoplus_{k=i+1}^\ell0_{k}\big)$
and that $\langle By, Ay\rangle_{\FF}= \langle B_iy_i,A_iy_i\rangle_{\FF}=a^2\neq0$, so $B\not\perp A$. Hence, $A$ is not left-symmetric, a contradiction.

The only possibilities left for $A$ are of form $\alpha_1\oplus\dots\oplus\alpha_p\oplus A_{p+1}\oplus\dots\oplus A_\ell$, with $\|A_k\|<\|A\|$ for $k> p$ (we identified the  $1$-by-$1$ summands with scalars $\alpha_k\in\KK_k$). We first claim that each of $A_k$ is zero for $k>p$. Namely, we clearly have $$0^p\oplus \mathcal M_{n_{p+1}}(\KK_{p+1})\oplus\dots\oplus \mathcal M_{n_\ell}(\KK_\ell)\subseteq \big(\alpha_1\oplus\dots\oplus\alpha_p\oplus A_{p+1}\oplus\dots\oplus A_\ell\big)^\bot,$$ 
(here, $0^p$ denotes $p$ repeated zeros).

Since $A$ is left-symmetric, it implies that 
 $X=\big(\bigoplus_{k=1}^{p+i-1} 0_{n_k}\big)\oplus A_{p+i}\oplus\big(\bigoplus_{k=p+i+1}^{\ell} 0_{n_k}\big)$  
satisfies $X\perp A$. Clearly if $X\neq0$ it achieves its norm only inside its unique non-zero block. Then, however, $X\perp A$ is equivalent to $A_{p+i}\perp A_{p+i}$, so that $A_{p+i}=0$ since the only matrix in $\mathcal M_{n_{p+i}}(\KK_{p+i})$ which is orthogonal to all matrices is a zero matrix.

Hence, the only possibilities left for $A$ are of form $\alpha_1\oplus\dots\oplus\alpha_p\oplus 0_{n_{p+1}}\oplus\dots\oplus 0_{n_\ell}$. We claim that only one of $\alpha_i$ can be non-zero. Without loss of generality $|\alpha_1|=\|A\|$. Assume  $\alpha_2\neq 0$ and notice that $$0\oplus\overline{\alpha_2}\oplus0^{p-2}\oplus 0_{n_{p+1}}\oplus\dots\oplus 0_{n_\ell}\in\big(\alpha_1\oplus\alpha_2\oplus\dots\oplus\alpha_p\oplus0_{n_{p+1}}\oplus\dots\oplus 0_{n_\ell}\big)^\bot,$$ but they are not mutually BJ orthogonal. Indeed, $\alpha_2=0$, and  the only  possibility for left-symmetric $A$ is that all, except possibly one, of  its blocks are zero and the non-zero block is of size $1$-by-$1$.

It is easily observed that these are indeed left-symmetric elements, namely:
given $A=\alpha\oplus\{0\}^{p-1}\oplus\big(\bigoplus_{k=p}^\ell 0_{n_k}\big)$, choose any $B=\beta\oplus\big(\bigoplus_{k=2}^\ell B_k)\in A^\bot$. Note that, modulo a  multiplication by scalars,  $x=1\oplus 0\oplus\dots\oplus 0$ is the only norm-attaining vector for $A$. Then, $0=\langle Ax, Bx\rangle_{\FF}= \langle \alpha\cdot 1, \beta\cdot 1\rangle_{\mathbb F} =\Re(\bar{\beta}\alpha)$ (or it equals $\bar{\beta}\alpha$ if $\FF=\CC$). Now, if $|\beta|=\|B\|$, then $x$ is also a norm-attaining vector for $B$ and we have $B\perp A$. If $|\beta|\neq \|B\|$, then it is clear that $B\perp A$. It implies $A$ is left-symmetric.\end{proof}

We  now characterize right-symmetric elements. The following lemma  which  generalizes \cite[Lemma 3.3]{simple} will be useful.

\begin{lemma}\label{lengthmaximal}
       Let $\A=\bigoplus_{k=1}^\ell\mathcal M_{n_k}(\mathbb K_k)$ be a $C^*$-algebra over $\FF$. For elements $A=\bigoplus_{k=1}^\ell A_k$ and $B=\bigoplus_{k=1}^\ell B_k$ we have $A^\bot\subseteq B^\bot$ if and only if for all $1\leq i\leq \ell$, we have $M_0^\ast(A_i)\subseteq M_0^\ast(B_i)$ and there exist non-zero $\alpha_i\in \mathbb F$ having the same modulus such that $A_ix_i=\alpha_i(B_ix_i)$ for all $x_i\in M_0^\ast(A_i).$
\end{lemma}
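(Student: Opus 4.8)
The plan is to reduce the comparison of the full outgoing neighborhoods to a block-by-block analysis, using two ingredients: the reformulation of BJ orthogonality from Lemma~\ref{M_0(A)} and the single-block version of the statement, \cite[Lemma 3.3]{simple}. Recall from Lemma~\ref{M_0(A)} that $C\in A^\bot$ exactly when there is a normalized $x=\bigoplus_k x_k$ with each $x_k\in M_0^\ast(A_k)$ and $\sum_k\langle A_kx_k,C_kx_k\rangle_\FF=0$, and that $M_0(A)=\bigoplus_k M_0^\ast(A_k)$. Writing $\Lambda$ for the set of norm-maximal blocks (those with $\|A_k\|=\|A\|$), it is convenient to record the equivalent form: $C\in A^\bot$ if and only if $0$ can be written as a convex combination $\sum_{k\in\Lambda}t_kw_k$ of values $w_k\in W_k(C):=\{\langle A_kx,C_kx\rangle_\FF : x\in M_0(A_k),\ \|x\|=1\}$. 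This convex-combination picture across blocks is exactly what makes the direct-sum case genuinely different from a single block.

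\noindent\emph{Sufficiency.} Assume the block conditions. Given $C\in A^\bot$ with a witness $x=\bigoplus_k x_k$ as above, each $x_k\in M_0^\ast(A_k)\subseteq M_0^\ast(B_k)$, so the same $x$ lies in $M_0^\ast(B)=M_0(B)$ and attains the norm of $B$ as well. On each contributing block one has $B_kx_k=\alpha_k^{-1}A_kx_k$, hence $\langle B_kx_k,C_kx_k\rangle_\FF=\alpha_k^{-1}\langle A_kx_k,C_kx_k\rangle_\FF$, since a scalar from $\FF$ factors out of the first slot of $\langle\,\cdot\,,\cdot\,\rangle_\FF$. Using the coherence of the scalars on the contributing blocks, the common factor pulls out of the sum and $\sum_k\langle B_kx_k,C_kx_k\rangle_\FF=0$, i.e.\ $C\in B^\bot$.

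\noindent\emph{Necessity.} I would organize this in three steps. First, to obtain $M_0^\ast(A_i)\subseteq M_0^\ast(B_i)$, argue by contraposition: given a unit vector in $M_0^\ast(A_i)\setminus M_0^\ast(B_i)$, build a probe $C$ whose blocks away from $i$ have their value sets $W_k(C)$ pushed strictly into an open half-plane (for instance by taking those blocks proportional to $A_k$), so that for $A$ the value $0$ can enter the convex combination only through block $i$; this yields $C\in A^\bot$ while the failure of membership in $M_0^\ast(B_i)$ obstructs $C\in B^\bot$. Second, with the $M_0^\ast$-containments available, the same half-plane isolation restricts attention to one norm-maximal block $i$, where $A^\bot\subseteq B^\bot$ forces the corresponding single-block inclusion; \cite[Lemma 3.3]{simple} then produces a non-zero $\alpha_i\in\FF$ with $A_ix=\alpha_iB_ix$ on $M_0^\ast(A_i)$.

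\noindent The third step---establishing the coherence of the scalars $\alpha_i$ across distinct norm-maximal blocks---is the main obstacle. The convex-combination structure is unavoidable here: a probe supported on a single block can always be escaped by dumping the norm-attaining mass onto another norm-maximal block, so no single-block probe detects the relation between $\alpha_i$ and $\alpha_j$. To expose it I would use two-block probes $C$ supported on a pair $i,j\in\Lambda$, tuning $W_i(C)$ and $W_j(C)$ so that $0$ sits on the boundary of the convex combinations, and then track how this boundary position moves when the factors $\alpha_i^{-1},\alpha_j^{-1}$ are inserted (exactly as in the sufficiency computation) to test membership in $B^\bot$. Demanding the implication for all such probes pins down the required relation between $\alpha_i$ and $\alpha_j$ asserted in the statement. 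I expect the careful bookkeeping of these planar convex-hull conditions, together with verifying that the quaternionic blocks (where $\langle\,\cdot\,,\cdot\,\rangle_\FF$ is the real part of a quaternion) behave identically, to be the most delicate part of the argument.
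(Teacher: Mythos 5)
Your necessity argument breaks down at its first step, and the breakdown is structural. The probe you build controls the wrong value sets: choosing $C_k$ proportional to $A_k$ away from block $i$ pushes the sets $W_k(C)=\{\langle A_kx,C_kx\rangle_\FF\}$ into a half-plane, which indeed gives $C\in A^\bot$, but membership of $C$ in $B^\bot$ is decided by the sets $\{\langle B_kz,C_kz\rangle_\FF;\ z\in M_0^\ast(B_k)\}$, and at this stage nothing is known about the blocks $B_k$ --- if, say, some $B_k$ happens to be BJ orthogonal to $A_k$ inside block $k$, your probe lies in $B^\bot$ and obstructs nothing. So ``the failure of membership in $M_0^\ast(B_i)$ obstructs $C\in B^\bot$'' is a non sequitur, and the same circularity undermines the half-plane isolation in your second step (your own remark that single-block probes can be escaped applies equally to the $B$-side of your construction). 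The paper avoids building individual probes altogether: for a unit vector $x_i\in M_0(A_i)$ the entire hyperplane $\ker F$, where $F(X)=\langle A_ix_i,X_ix_i\rangle_\FF$, is contained in $A^\bot\subseteq B^\bot$, so James' theorem (\cite[Theorem 2.1]{james1}) forces $|F(B)|=\|F\|\,\|B\|$; running the equality case of Cauchy--Schwarz through this identity yields simultaneously $\|B_i\|=\|B\|$, $x_i\in M_0(B_i)$ (hence the $M_0^\ast$-containment), and $A_ix_i=\lambda_i(x_i)B_ix_i$, after which constancy of $\lambda_i(\cdot)$ on $M_0(A_i)$ follows from convexity of the numerical range of the compression of $A_i^\ast B_i$. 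This supporting-functional mechanism, which is insensitive to where $B$ attains its norm, is the idea missing from your proposal.

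Second, your sufficiency step silently upgrades ``same modulus'' to ``equal'': if the $\alpha_k$ merely share a modulus there is no common factor to pull out of $\sum_k\alpha_k^{-1}\langle A_kx_k,C_kx_k\rangle_\FF$, and under the literal hypothesis the implication is actually false. In $\CC\oplus\CC$ take $A=(1,1)$ and $B=(1,i)$: all block conditions hold with $\alpha_1=1$, $\alpha_2=-i$, yet $C=(1,-1)$ satisfies $0\in\mathrm{conv}\{1,-1\}$, so $C\in A^\bot$, while the segment $\mathrm{conv}\{1,-i\}$ misses $0$, so $C\notin B^\bot$. Thus the cross-block coherence you isolate in your third step is not optional bookkeeping but exactly what the ``if'' direction needs, and your proposal leaves it unproved. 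It can be closed with the same tool as above rather than with delicate planar analysis: for norm-maximal blocks $i,j$ the functional $F(X)=\tfrac12\big(\langle A_ix_i,X_ix_i\rangle_\FF+\langle A_jx_j,X_jx_j\rangle_\FF\big)$ has $\ker F\subseteq A^\bot$ (witness $(x_i\oplus x_j)/\sqrt{2}$), so James' theorem gives $|\lambda_i^{-1}+\lambda_j^{-1}|=2\|B\|/\|A\|$, and since $|\lambda_i^{-1}|=|\lambda_j^{-1}|=\|B\|/\|A\|$, strict convexity of the disc forces $\lambda_i=\lambda_j$ --- the same midpoint trick the paper uses within a block. For what it is worth, on this last point your instinct is sharper than the text you were asked to prove: the paper asserts only equal moduli and dismisses the converse in one line, whereas a coherent common scalar is what sufficiency genuinely requires. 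But as submitted, both halves of your proof have gaps.
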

\begin{proof} Let $A^\bot\subseteq B^\bot$. If $\|A_i\|<\|A\|$, then $M_0^*(A_i)=\{0_{i}\}\subseteq M_0^*(B_i)$. Assume $\|A_i\|=\|A\|$, let $x_i\in M_0(A_i)$ be a normalized vector and consider the hyperplane $H=\{X=\bigoplus_{k=1}^\ell X_k\in\A;\;\; \langle X_ix_i,A_ix_i\rangle_{\mathbb F}=0\}$  contained in $A^\bot\subseteq B^\bot$. By \cite[Theorem 2.1]{james1}
(whose proof works over real as well as complex normed spaces), we have $|F(B)|=\|F\|\|B\|$, where $F(X)= \langle A_ix_i, X_ix_i\rangle_{\mathbb F}$ is an $\mathbb F$-linear functional on $\A$. Hence  $$|\langle A_ix_i, B_ix_i\rangle_{\mathbb F}|=|F(B)|=\|F\|\|B\| =\|A_ix_i\|\cdot\|x_i\|\cdot\|B\|,$$ (where last equality follows as an application of Cauchy-Schwarz inequality and the fact that $\|X_i\|\le \|X\|$). Now, we have 
\begin{align*}\|A_ix_i\|\cdot\|x_i\|\cdot\|B\|= |\langle A_ix_i, B_ix_i\rangle_{\mathbb F}|\leq \|A_ix_i\|\|B_ix_i\|\leq \|A_ix_i\|\cdot\|x_i\|\cdot\|B_i\|\leq \|A_ix_i\|\cdot\|x_i\|\cdot\|B\|.\end{align*}
So, we get equality throughout, so
$$\|B\|=\|B_i\|$$ and $\|B_ix_i\| = \|B_i\|\|x_i\|$. Also, by the condition of equality in Cauchy-Schwarz inequality, we get $A_ix_i=\lambda_i (B_ix_i)$ for some $\lambda_i=\lambda_i(x_i)\in\mathbb F$. Now, we prove $\lambda_i(x_i)$ is independent of $x_i\in M_0(A_i)$. First note that $|\lambda_i(x_i)|=\|A_i\|/\|B_i\|=\|A\|/\|B\|$.  Let $x_i, y_i\in M_0(A_i)$ be normalized vectors such that $A_ix_i=\mu_i (B_ix_i)$ and $A_iy_i=\nu_i (B_iy_i)$. We first observe that there exists a normalized vector $w_i\in M_0(A_i)$ such that $$\dfrac{\mu_i+\nu_i}{2}=\dfrac{\langle B_ix_i, A_ix_i\rangle_{\mathbb F}+\langle B_iy_i, A_iy_i\rangle_{\mathbb F}}{2}=\langle B_iw_i, A_iw_i\rangle_{\mathbb F};$$ in the last step we have used the convexity of $\{\langle B_ix_i, A_ix_i\rangle_{\mathbb F};\;\; \|x_i\|=1, x_i\in M_0(A_i)\}$, the numerical range of the compression of  $A_i^*B_i$ to the subspace $M_0(A_i)$ (recall that $A_i^*B_i$ is embedded into a suitable ${\mathcal M}_{n}(\RR), {\mathcal M}_{2n}(\RR)$ or ${\mathcal M}_{4n}(\RR)$). Again, $w_i\in M_0(A_i)$ implies that $A_iw_i = \tau_i(B_iw_i)$ for some~$\tau_i$ with $|\tau_i|=\|A\|/\|B\|$. Thus, $|\mu_i+\nu_i|=2\|A\|/\|B\|$. But $|\mu_i|=|\nu_i|=\|A\|/\|B\|$ and hence  $\mu_i=\nu_i$. So, $\lambda_i(x_i)$ is independent of $x_i$.

Also, $|\alpha_i|=\|A\|/\|B\|$ for all $i$, when $M_0^*(A_i)\neq \{0_{i}\}$ and in case $M_0^*(A_i)= \{0_{i}\}$, we can choose any $\alpha_i$, so in particular $\alpha_i=\|A\|/\|B\|$. The converse follows from Lemma \ref{M_0(A)} (i).\end{proof}

As an application of Lemma \ref{M_0(A)}, we also get a classification of right-symmetric elements. It  turns out that they are nothing but scalar multiples of unitary elements. For simple complex finite-dimensional $C^*$-algebra, this follows from \cite[Theorem 2.5]{rightsymmetricturnsek} (but see also \cite[Lemma 3.7]{simple}).

\begin{lemma}\label{right} The following are equivalent in a finite-dimensional $C^*$-algebra $\mathcal A$ over $\FF$:
\begin{itemize}
    \item[(i)] $A$ is a scalar multiple of some unitary $U$. 
    \item[(ii)] There does not exist a non-zero $B\in\A$ such that $A^\bot\subsetneq B^\bot$.
    \item[(iii)] $A$ is right-symmetric in $\mathcal A$.
\end{itemize}
\end{lemma}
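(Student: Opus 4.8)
The plan is to hinge everything on one geometric fact: an element $A$ is a scalar multiple of a unitary exactly when it attains its norm on every vector, i.e. when $M_0(A)=\widehat{\mathbb K}$ (equivalently, every singular value in every block equals $\|A\|$). I would prove the two ``forward'' implications (i)$\Rightarrow$(ii) and (i)$\Rightarrow$(iii) directly, and then close the equivalence through the contrapositives: assuming $A$ is \emph{not} a scalar multiple of a unitary, I would make both (ii) and (iii) fail. Since ``not~(i)$\Rightarrow$not~(ii)'' is ``(ii)$\Rightarrow$(i)'' and ``not~(i)$\Rightarrow$not~(iii)'' is ``(iii)$\Rightarrow$(i)'', this yields (i)$\Leftrightarrow$(ii) and (i)$\Leftrightarrow$(iii).

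For (i)$\Rightarrow$(iii): if $A=\lambda U$ with $\lambda\neq0$ then $M_0(A)=\widehat{\mathbb K}$, so given any $C$ with $C\perp A$, Lemma~\ref{M_0(A)}(i) supplies a normalized $x\in M_0(C)$ with $\langle Cx,Ax\rangle_{\mathbb F}=0$; since $\langle Ax,Cx\rangle_{\mathbb F}=\overline{\langle Cx,Ax\rangle_{\mathbb F}}$ (and is simply symmetric when $\mathbb F=\mathbb R$), this same $x$ — which automatically lies in $M_0(A)=\widehat{\mathbb K}$ — witnesses $A\perp C$ via Lemma~\ref{M_0(A)}(i), so $A$ is right-symmetric (the case $A=0$ is immediate, as $0^\bot=\mathcal A={}^\bot0$). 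For (i)$\Rightarrow$(ii): here $M_0^\ast(A_i)$ is the whole block $\mathbb K_i^{n_i}$ for every $i$, so if $A^\bot\subseteq B^\bot$ then Lemma~\ref{lengthmaximal} forces $A_i=\alpha_iB_i$ on the whole block with the $\alpha_i$ of common modulus; thus $B_i=\alpha_i^{-1}A_i$, and reading Lemma~\ref{lengthmaximal} in the opposite direction gives $B^\bot\subseteq A^\bot$ as well. Hence $A^\bot=B^\bot$ and no strict enlargement exists.

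For the contrapositives I assume $A$ is not a scalar multiple of a unitary, so some block $A_{k_0}$ has a singular vector $x_m^{k_0}$ with singular value $\sigma_m<\|A\|$. To defeat (ii) I would take $B$ to be the blockwise \emph{unitarization} of $A$, replacing each $A_k=\sum_j\sigma_j^k y_j^k(x_j^k)^\ast$ by $B_k=\|A\|\,U_k$ with $U_k:=\sum_j y_j^k(x_j^k)^\ast$: then $B$ agrees with $A$ on each $M_0^\ast(A_k)$ (with $\alpha_k=1$) while $M_0^\ast(B_k)$ is the whole block, so Lemma~\ref{lengthmaximal} gives $A^\bot\subseteq B^\bot$, and the inclusion is strict because $A^\bot=B^\bot$ would force every $M_0^\ast(A_k)$ to be the whole block, i.e. $A$ a scalar multiple of a unitary. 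To defeat (iii) I would instead keep the norm fixed and let $C=\|A\|\bigoplus_k V_k$ be the scaled unitary obtained from the unitarization by flipping one sign: $V_{k_0}x_m^{k_0}=-y_m^{k_0}$ and $V_k=U_k$ otherwise. As $C$ is a scaled unitary, $M_0(C)=\widehat{\mathbb K}$, so $C\perp A$ needs only \emph{some} unit $x$ with $\langle Cx,Ax\rangle_{\mathbb F}=0$; the flipped sign renders $A_{k_0}^\ast V_{k_0}=\sum_{j\neq m}\sigma_j x_jx_j^\ast-\sigma_m x_mx_m^\ast$ indefinite, and balancing its negative eigendirection $x_m$ against a positive one — a maximal singular direction taken from $A_{k_0}$ itself if that block carries another positive singular value, and otherwise borrowed from a maximal block — produces such an $x$. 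Meanwhile every $x\in M_0(A)$ has $x_k=0$ on submaximal blocks and is orthogonal to $x_m^{k_0}$ inside maximal ones, so there $C$ coincides with $A$ and $\langle Cx,Ax\rangle_{\mathbb F}=\|A\|^2\|x\|^2>0$; hence $A\not\perp C$, and $A$ is not right-symmetric.

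The main obstacle is this final construction. The rank-one ``orthogonalizing'' idea inside a single $\mathcal M_n(\mathbb K)$ is routine; the real point is to exhibit a $C$ that simultaneously lies in the block-diagonal algebra $\mathcal A$, is orthogonal to $A$, and is \emph{not} recovered by $A$, all uniformly over $\mathbb F\in\{\mathbb R,\mathbb C\}$ and over possibly mixed real/complex/quaternionic blocks. Keeping $C$ a scaled unitary is what makes $C\perp A$ cheap, while confining the asymmetry to a single submaximal singular direction is what protects $A\not\perp C$; the only genuine case distinction is whether the vanishing of $\langle Cx,Ax\rangle_{\mathbb F}$ can be realized within one block or must be spread across a maximal and a submaximal block. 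I would also check at the outset, via~\eqref{eq:inner-product-equality}, that these inner-product computations behave correctly on quaternionic blocks when $\mathbb F=\mathbb R$.
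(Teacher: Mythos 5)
Your proposal is correct, and for the substantive direction it takes a genuinely different route from the paper. The two easy parts coincide: like the paper, you get (i)$\Rightarrow$(iii) from the fact that a scalar multiple of a unitary attains its norm at every vector, and you handle (i)$\Leftrightarrow$(ii) through Lemma~\ref{lengthmaximal} (your explicit witness for $\neg$(i)$\Rightarrow\neg$(ii), the scaled blockwise polar unitary $B=\|A\|\bigoplus_k U_k$, just makes concrete what the paper extracts implicitly from the dimension condition $\dim_{\KK_i}M_0^\ast(A_i)<\dim_{\KK_i}M_0^\ast(B_i)$). The real divergence is in (iii)$\Rightarrow$(i). The paper argues in two stages with two separate constructions: first it forces all blocks to have equal norm (via $X=X_1\oplus(A_2/2)\oplus\dots\oplus(A_\ell/2)$ with $X_1\perp A_1$), and then, within a block whose singular values are not all equal, it builds a rather intricate $B_i=e_1^i(e_1^i)^*+\dots+x_j^i(y_j^i)^*+x_{j+1}^i(y_{j+1}^i)^*$ and finishes with a positive-definite-compression argument. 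Your single construction --- flip the sign of the polar unitary at one submaximal singular direction $x_m^{k_0}$, scale by $\|A\|$ --- subsumes both stages: $C\perp A$ follows by the intermediate-value balancing (the cross terms vanish by orthonormality of the singular vectors, both within a block and across blocks), while $A\not\perp C$ is immediate because $C$ agrees with $A$ on all of $M_0(A)$ (the flipped direction being $\KK_{k_0}$-orthogonal to $M_0^\ast(A_{k_0})$), so every candidate witness gives $\langle Ax,Cx\rangle_{\FF}=\|A\|^2>0$. What your approach buys is uniformity and economy: one explicit witness, valid verbatim over $\RR$, $\CC$ and $\HH$ blocks, needing only the SVD, Lemma~\ref{M_0(A)} and continuity; what the paper's buys is that its block-norm-equalization step and its compression computation stay entirely inside the $M_0^\ast$/Lemma~\ref{lengthmaximal} formalism it reuses elsewhere. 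One cosmetic remark: your phrase that $A_{k_0}^\ast V_{k_0}$ is rendered ``indefinite'' is literally true only when $\sigma_m>0$ and the block carries another positive singular value, but your own case distinction (borrowing a maximal direction from another block, or using $x_m^{k_0}$ alone when $\sigma_m=0$) already covers the remaining cases, so the argument is complete.
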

\begin{proof} Without loss of generality, let $\mathcal A=\mathcal M_{n_1}(\mathbb K_1)\oplus\dots\oplus\mathcal M_{n_\ell}(\mathbb K_\ell)$. 

 (i) $\Longleftrightarrow$ (ii).  Notice that the unitaries in $\mathcal M_{n_1}(\mathbb K_1)\oplus\dots\oplus\mathcal M_{n_\ell}(\mathbb K_\ell)$ are of form $U_1\oplus\dots\oplus U_\ell$ where $U_i$ is a unitary matrix in $\mathcal M_{n_i}(\mathbb K_i)$. Therefore,   $A=A_1\oplus\dots\oplus A_\ell$ is a multiple of unitary  if and only if $M_0^\ast(A_i)=\KK_i^{n_i}$ for each $i$.  Also, by Lemma~\ref{lengthmaximal},  there exists non-zero $B$ such that $A^\bot\subsetneq B^\bot$ if and only if  $\mathrm{dim}_{\mathbb K_i}(M_0^{\ast}(A_i))<\mathrm{dim}_{\mathbb K_i}(M_0^\ast(B_i))$  for some $i$.  The two claims combined give the wanted equivalence.

 (i) $\Longleftrightarrow$ (iii). Assume $A=\bigoplus_{k=1}^\ell A_k$ is right-symmetric. We first show that each  $A_k\in\mathcal M_{n_k}(\mathbb K_k)$ is of the same norm. Suppose on the contrary. Without loss of generality $\|A_1\|<\max\{\|A_2\|,\dots,\|A_\ell\|\}=\|A_i\|=1$ where $i\neq 1$ and  consider $X=X_1\oplus(A_2/2)\oplus\dots\oplus (A_\ell/2)$ with $\|X_1\|=1$ and $X_1$ is BJ orthogonal to $A_1$. Then, we have $X\perp A$. Due to right-symmetry of $A$  this implies  $A\perp X$ so there exists a normalized vector $x \in M_0(A)\subseteq \Span\{M_0(A_2),\dots, M_0(A_\ell)\}=\{(0, x_2,\dots, x_\ell) ;\;\; \|A(0, x_2,\dots, x_\ell)\|=\|A\|\cdot \|(0,x_2,\dots,x_\ell)\|\text{ and }x_k\in\mathbb K_k^{n_k}\}$ such that
  $$0=\langle Ax, Xx\rangle_{\FF}
=\sum\limits_{k=2}^\ell \frac{1}{2}\langle A_k x_k,A_k x_k\rangle_{\mathbb F}=\frac{1}{2}\sum_k\|A_k x_k\|^2=\frac{1}{2}\|Ax\|^2\neq0,$$
which is a contradiction. This implies $\|A_1\|=\max\{\|A_2\|,\dots,\|A_\ell\|\}=\|A\|$. Similarly, we get $\|A_i\|=\|A\|$ for all $1\leq i\leq \ell$. Now, we prove that each $A_i$ is a scalar multiple of a unitary matrix.

Let $1\leq i\leq\ell$ be fixed. Recall that $(X_1\oplus\dots\oplus X_\ell)\mapsto (U_1X_1V_1^\ast,\dots,U_\ell X_\ell V_\ell^\ast)$ is an isometry for unitaries $U_k$ and $V_k$ in $\mathcal M_{n_k}(\mathbb K_k)$, and hence induces a BJ isomorphism, so we can assume with no loss of generality that $$A_i=\Sigma_i=\diag(\sigma_1^i,\dots,\sigma_{n_i}^i)$$ with $\sigma_1^i=\dots=\sigma_j^i>\sigma_{j+1}^i\ge\dots\ge \sigma_{n_i}^i\ge0$ for some $j\in\{1,\dots,n_i\}$. We claim that $j=n_i$ i.e. $A_i$ is a scalar matrix $\|A_i\|=\|A\|$, and this  will imply that $A$ is a scalar multiple of unitary.

Assume, if possible, $j\le n_i-1$ and denote $\sigma:=\frac{\sigma_{j+1}^i}{\sigma_j^i}\in[0,1)$, and consider $B=B_1\oplus\dots \oplus B_{\ell}$ where $B_k=A_k$ if $k\neq i$ and
$$B_i=e_1^i(e_1^i)^*+\dots+e_{j-1}^i(e_{j-1}^i)^*+x_j^i(y_j^i)^*+x_{j+1}^i(y_{j+1}^i)^*,$$  where  $\{e_j^i;\;\;1\le j\leq n_i\}$ denotes the standard basis of $\KK_i^{n_i}$ and
$$x_j^i=\frac{e_j^i-e_{j+1}^i}{\sqrt{2}},\quad y_j^i=\frac{\sigma e_j^i
   +e_{j+1}^i  }{\sqrt{1+\sigma^2}},\quad x_{j+1}^i=\frac{e_j^i+e_{j+1}^i}{\sqrt{2}},\quad y_{j+1}^i=\frac{e_j^i-\sigma e_{j+1}^i}{\sqrt{1+\sigma ^2}}.$$
   Notice that $B$ is already in its singular value decomposition and achieves its norm on $y_j^i$, which is mapped into $By_j^i=x_j^i$ while $Ay_j^i=\frac{\sigma_{j+1}^i}{\sqrt{1+\sigma^2}} (e_j^i+e_{j+1}^i)$  is clearly orthogonal to $x_j^i$ in $\langle \cdot,\cdot\rangle_{\FF}$. Thus, $B\perp A$ by Lemma \ref{M_0(A)}. Because of right-symmetricity of $A$ we then have $A\perp B$, so there exists a normalized $w=w_1\oplus\dots\oplus w_\ell \in M_0(A)=M_0^\ast(A)=\bigoplus_1^\ell M_0(A_k)$
   with $\|Aw\|=\|A\|$ and
   $0=\sum_k\langle A_kw_k,B_kw_k
   \rangle_{\FF}$.
   Due to $B_k=A_k$ whenever $k\neq i$ we see that 
   \begin{align*}0&=\sum_{k\neq i} \|A_kw_k\|^2+\langle A_iw_i,B_iw_i\rangle_{\FF}=\|A\|^2\sum_{k\neq i}\|w_k\|^2+\langle A_iw_i,B_iw_i\rangle_{\FF}\\
   &=\|A\|^2\sum_{k\neq i}\|w_k\|^2+\|A_i\|\langle w_i,B_iw_i\rangle_{\FF}\\
   &=\|A\|^2\sum_{k\neq i}\|w_k\|^2+\|A\|\langle w_i,B_iw_i\rangle_{\FF};
   \end{align*} in the one but last equality we used that the restriction of $A_i$  to the $\KK_i$-subspace  $\KK_i^{j}\oplus0_{n_i-j}=M_0(A_i)=M_0^\ast(A_i) \ni w_i$ is a $\sigma_1^i$-multiple of  identity. Notice also that the compression of $B_i$ to  this subspace  equals $\left(\begin{smallmatrix}
      I_{j-1} & 0 \\
 0 & \frac{1-\sigma}{\sqrt{2} \sqrt{1+\sigma ^2}} \\
   \end{smallmatrix}\right)$ and is positive-definite. 
       This gives $w_k=0$ for every $k$, a contradiction.
      
Conversely, if $A$ is a scalar multiple of a unitary then it achieves its norm on every non-zero vector. Consider an arbitrary $B\perp A$; then $B$ achieves its norm on some vector $y$ with $\langle By,Ay\rangle_{\mathbb F}=0$. Since $A$ also achieves its norm on the same vector we see that $A\perp B$ also holds, so that $A$ is a right-symmetric element.
\end{proof}

We note that Lemma \ref{right} implies that the set of scalar multiples of unitary elements is invariant under any BJ isomorphism between two finite-dimensional $C^*$-algebras. We end this section by proving that same holds for smooth elements of finite-dimensional $C^*$-algebras also. We  call $A=\bigoplus_{k=1}^\ell A_k\in\A$ to be \textit{smooth} if there exists exactly one index $i$ such that $\|A_i\|=\|A\|$ and $\mathrm{dim}_{\mathbb K_i}(M_0(A_i))=1$. For example, 
  $A=\big(\bigoplus_{k=1}^{i-1} 0_{n_k}\big)\oplus E_{st}^i\oplus \big(\bigoplus_{k=i+1}^\ell 0_{n_k}\big)$ 
are smooth elements for all matrix units $E_{st}^i\in\mathcal M_{n_i}(\mathbb K_i)$, which can be easily seen by writing them as $E_{st}^i=e_s^i(e_t^i)^\ast$ and using the fact that $\|E_{st}^ix\|=|(e_t^i)^\ast x|=|x_t^i|\le \|x\|$ for $x=\sum_{j=1}^{n_i} x_j^i e_j^i\in\KK_i^{n_i}$, with inequality being strict except if $x=x_t^i e_t^i$.

There is a well known notion of smoothness in general Banach space $V$ that states that a vector $v\in V$ is smooth  if and only if there exists a unique normalized functional $f$ on $V$ such that $f(v)=\|v\|$ (such $f$ is called a supporting functional for $v$). We  prove below that the two  definitions are equivalent. However before we do that let us  note that our definition of smoothness on finite-dimensional $C^*$-algebras is a special case of Holub's condition, see \cite{hennefeld}. The equivalence of Holub's condition and smoothness has been studied by many authors, for a brief survey see \cite{grover}. In particular, it is known that the two definitions are equivalent for finite-dimensional simple or abelian  $C^*$-algebras, see \cite[Theorem 2.1]{hennefeld}, \cite[Theorem 3.1]{abatzoglou}, \cite[Theorem 3.3]{holub}, and  \cite[Corollary 3.3]{keckic2}, \cite[Corollary 2.2]{keckic1}. We show below that the same holds for any finite-dimensional $C^*$-algebra.
\begin{lemma}\label{smooth_one_block}  Let $\A$ be a finite-dimensional $C^*$-algebra over $\mathbb F$. Then, $A\in\A$ is a smooth element if and only if there exists a unique normalized functional $f$ on $\mathcal A$ such that $f(A)=\|A\|$. Furthermore, for a smooth element $A\in\A$, there exists a unique $i$ such that if $x=\bigoplus_{k=1}^\ell x_k\in M_0(A)$, then $x_k=0$ for all $k\neq i$ and we have
 $$A^\bot=\big(0_{n_1}\oplus\dots\oplus 0_{n_{i-1}}\oplus (A_ix_i)x_i^*\oplus 0_{n_{i+1}}\oplus\dots\oplus 0_{n_\ell}\big)^\bot.$$
\end{lemma}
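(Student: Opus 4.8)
The plan is to prove the two statements of Lemma~\ref{smooth_one_block}---the equivalence of the two notions of smoothness, and the explicit formula for $A^\bot$---by reducing everything to the structure of $M_0(A)$ established in Lemma~\ref{M_0(A)} and to the supporting-functional description of BJ orthogonality recorded in~\eqref{outgoingnbd}.

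\begin{proof}
Recall first the standard fact that supporting functionals at a norm-one element $A$ correspond, via~\eqref{outgoingnbd}, to norm-attaining vectors: each normalized $x\in M_0(A)$ produces a supporting functional $f_x(X):=\langle A x, X x\rangle_{\FF}/\|A\|$ with $f_x(A)=\|A\|$, and by the Stampfli--Magajna--Bhatia--\v Semrl description used in Lemma~\ref{M_0(A)}(i) every supporting functional arises this way. Hence $A$ is smooth in the Banach-space sense (a unique supporting functional) if and only if the functionals $f_x$ all coincide as $x$ ranges over normalized vectors of $M_0(A)$. I would then show this is equivalent to our block-definition of smoothness (exactly one $i$ with $\|A_i\|=\|A\|$ and $\dim_{\KK_i} M_0(A_i)=1$). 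For the direction ``block-smooth $\Rightarrow$ unique functional'': if $M_0(A)$ is one-dimensional over $\KK_i$, spanned by a single $x_i$, then by Lemma~\ref{M_0(A)}(ii) every normalized norm-attaining vector is $x_i\lambda$ with $|\lambda|=1$, and~\eqref{eq:inner-product-equality} gives $f_{x_i\lambda}=f_{x_i}$, so the functional is unique. For the converse I would argue contrapositively: if $A$ is not block-smooth, then either two distinct blocks attain the norm, or a single block has $\dim_{\KK_i} M_0(A_i)\geq 2$; in either case $M_0(A)$ contains two $\FF$-independent normalized vectors $x,y$ that are not unimodular multiples of one another, and I would exhibit a single $X\in\A$ (concretely a matrix unit supported in the relevant block, as in the smoothness examples preceding the lemma) on which the two functionals $f_x$ and $f_y$ take different values, so that smoothness fails.

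For the formula for $A^\bot$, let $i$ be the unique norm-attaining index and $x_i$ the (up to scalar) unique generator of $M_0(A)=M_0(A_i)$, so every norm-attaining vector of $A$ is $x=0\oplus\dots\oplus x_i\lambda\oplus\dots\oplus 0$. By Lemma~\ref{M_0(A)}(i), $B\in A^\bot$ if and only if $\langle A x,B x\rangle_{\FF}=0$ for some normalized norm-attaining $x$; because $M_0(A)$ is one-dimensional, this single condition reads $\langle A_i x_i, B_i x_i\rangle_{\FF}=0$. Now set $C:=0_{n_1}\oplus\dots\oplus (A_i x_i)x_i^\ast\oplus\dots\oplus 0_{n_\ell}$. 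This $C$ is a rank-one (hence smooth) element attaining its norm on the same vector $x$, with $M_0(C)=M_0(A)$, and $C x = (A_i x_i) x_i^\ast x_i = A_i x_i$ (using $x_i^\ast x_i=1$); so $\langle C x, B x\rangle_{\FF}=\langle A_i x_i, B_i x_i\rangle_{\FF}$ coincides with the orthogonality condition for $A$. Therefore $A^\bot=C^\bot$, which is precisely the asserted identity.

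The main obstacle I expect is the converse of the equivalence, namely showing that \emph{failure} of block-smoothness genuinely produces two distinct supporting functionals rather than merely two norm-attaining vectors. The subtlety is twofold: over $\KK_i=\HH$ the relation~\eqref{eq:inner-product-equality} only controls scaling by unit quaternions, so I must take care that two $\KK_i$-independent vectors in $M_0(A_i)$ really give distinct functionals and are not identified by some right-quaternion action; and when the norm is attained in two different blocks, the separating element $X$ must be chosen to see only one block's contribution. Both points are handled by testing against an explicit matrix unit $E^i_{st}$ supported in the appropriate block---using, as in the paragraph preceding the lemma, that $\|E^i_{st}x\|=|x^i_t|$---which evaluates the two candidate functionals to manifestly different values; the rest is the routine verification that these functionals are normalized and support $A$.
\end{proof}
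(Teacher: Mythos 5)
Your proposal follows the same route as the paper's proof: the same supporting functionals $f_x(X)=\frac{1}{\|A\|}\la Ax,Xx\ra_{\FF}$ attached to normalized $x\in M_0(A)$, the same two-case split in the converse direction (two blocks attaining the norm, or $\dim_{\KK_i}M_0(A_i)\ge 2$), and the same derivation of $A^\bot=C^\bot$ for $C=0_{n_1}\oplus\dots\oplus(A_ix_i)x_i^*\oplus\dots\oplus 0_{n_\ell}$ from Lemma \ref{M_0(A)}(i) together with \eqref{eq:inner-product-equality}; that last part of your argument is complete and correct. However, your first step contains a genuine gap: the claim that every supporting functional at $A$ arises as some $f_x$ is false, and the Stampfli--Magajna--Bhatia--\v Semrl theorem does not assert it --- that theorem describes the orthogonality relation, i.e.\ $A^\bot=\bigcup_x\ker f_x$, whereas the set of supporting functionals is convex and in general strictly larger than $\{f_x;\;x\in M_0(A)\}$. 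For instance, for $A=I_2\in{\mathcal M}_2(\RR)$ the functional $X\mapsto\frac12\mathrm{tr}(X)=\frac12\big(f_{e_1}+f_{e_2}\big)(X)$ supports $A$ but equals no $f_x$ (that would force $x_1^2=x_2^2=\frac12$ and $x_1x_2=0$). Your implication ``block-smooth $\Rightarrow$ unique supporting functional'' rests entirely on this claim. The paper closes exactly this gap using \eqref{outgoingnbd}: when $M_0(A)=x_i\KK_i$, one has $A^\bot=\ker f_{x_i}$, a hyperplane; by \eqref{outgoingnbd} the kernel of \emph{any} supporting functional is contained in $A^\bot$, hence equals $\ker f_{x_i}$, and comparing values at $A$ gives uniqueness. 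You cite \eqref{outgoingnbd} in your opening sentence but never run this argument; it is the missing ingredient.

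Second, your separating device in the converse direction --- evaluating $f_x$ and $f_y$ on a matrix unit $E^i_{st}$ --- breaks down precisely in the real cases you yourself flag as delicate. Over $\FF=\RR$ one computes $f_x(E^i_{st})=\frac{1}{\|A_i\|}\mathrm{Re}\big(\overline{x_t}\,(A_ix)_s\big)$, so matrix units only detect the real part of the rank-one matrix $(A_ix)x^*$, and this can vanish identically. Concretely, in the real $C^*$-algebra $\CC\oplus\CC$ take $A=(\boldsymbol{i},\boldsymbol{i})$: both blocks attain the norm, the two supporting functionals $f_1(X)=\mathrm{Re}(\overline{X_1}\,\boldsymbol{i})$ and $f_2(X)=\mathrm{Re}(\overline{X_2}\,\boldsymbol{i})$ are distinct (they differ at $(\boldsymbol{i},0)$), yet both vanish on both matrix units $(1,0)$ and $(0,1)$; similarly $\boldsymbol{i}I_2\in{\mathcal M}_2(\CC)$, viewed as an element of a real $C^*$-algebra, defeats your device in the second case. (The device does work when $\FF=\CC$, and for real blocks.) The repair is simple and uniform: test against $0_{n_1}\oplus\dots\oplus(A_ix)x^*\oplus\dots\oplus 0_{n_\ell}$ --- the very element appearing in the statement of the lemma --- on which $f_x$ takes the value $\|A\|\neq 0$ while $f_y$ vanishes: in your first case because this element is supported in a block that $f_y$ does not see, and in your second case because one may assume $x^*y=0$ after Gram--Schmidt. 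Alternatively, test on the elements $E^i_{st}\mu$ with $\mu$ running over the imaginary units of $\KK_i$. With these two repairs your argument coincides with the paper's proof.
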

\begin{proof} First, let $A$ be a smooth element of $\A$. By definition  there exists exactly one $i$ such that $\|A_i\|=\|A\|$ which, by Lemma~\ref{M_0(A)}, gives the statement about the vectors in $ M_0(A)$. Moreover,
 we also have $\mathrm{dim}_{\KK_i}(M_0(A_i))=1$, so there exists a  vector  $x_i\in\mathbb K_i^{n_i}$ such that \begin{equation}\label{eqnewfor}
 M_0(A)=0\oplus\dots\oplus 0\oplus x_i\mathbb K_i\oplus 0\oplus\dots\oplus 0
 \end{equation} Then, (i) of Lemma \ref{M_0(A)} and identity \eqref{eq:inner-product-equality}, by which  $\langle A(x_i\gamma),B(x_i\gamma)\rangle_{\FF}=|\gamma|^2\langle Ax_i,Bx_i\rangle_{\FF}$  for each normalized vector $x_i\gamma\in x_i\KK_i= M_0(A_i)$ imply
 $$A^\bot=\big(0_{n_1}\oplus\dots\oplus 0_{n_{i-1}}\oplus (A_ix_i)x_i^*\oplus 0_{n_{i+1}}\oplus\dots\oplus 0_{n_\ell}\big)^\bot.$$ This proves the last statement. 

To prove the first one, let $A$ be a smooth element which attains its norm on $i$-th component and define an $\FF$-linear functional $f$ on $\A$ as $f\colon \big(\bigoplus_{k=1}^\ell X_k\big)\mapsto \frac{1}{\|A_{i}\|}\langle A_ix_i, X_ix_i\rangle_{\mathbb F}$. Then, by Cauchy-Schwarz, $f$ is a normalized functional and  $f(A)=\|A_i\|=\|A\|$; also, $A^\bot=\mathop{\mathrm{Ker}}f$. By~\eqref{outgoingnbd}, the kernel of every supporting functional of $A$ is contained in $A^\bot=\mathop{\mathrm{Ker}}f$ so $f$ is a unique supporting functional of $A$. 

 Conversely, let $A\in\A$ be such that there exists a unique normalized supporting  functional for $A$. Assume there exist $j\neq i$ such that $\|A_i\|=\|A_j\|=\|A\|$. Then there would be normalized  vectors $x\in\mathbb K_i^{n_i}$ and $y\in\mathbb K_j^{n_j}$ such that $\|A_ix\|=\|A\|$ and $\|A_jy\|=\|A\|$ so   $f_x\colon \big(\bigoplus_{k=1}^\ell X_k\big)\to \frac{1}{\|A_i\|}\langle A_ix, X_i\rangle_\FF$ and $f_y\colon \big(\bigoplus_{k=1}^\ell X_k\big)\to \frac{1}{\|A_j\|}\langle A_jy, X_jy\rangle_\FF$ would be two  distinct supporting (normalized) functionals for $A$, which contradicts our assumption that $A$ has a unique supporting (normalized) functional for $A$. So, there exists a unique $i$ such that $\|A_i\|=\|A\|$. Similar arguments prove that  there does not exist two $\KK_i$-linearly independent vectors $x, y\in M_0(A_i)\subseteq\KK_i^{n_i}$. Thus, $\mathrm{dim}_{\KK_i}(M_0(A))=1$.
\end{proof}

The next lemma will show  that smooth elements are preserved under BJ isomorphism.
\begin{lemma}\label{smoothchar} Let $\A$ be a finite-dimensional $C^*$-algebra over $\mathbb F$. Then $A\in\A$ is a smooth element if and only if there does not exist  $B\in\A$ such that $B^\bot\subsetneq A^\bot$.
\end{lemma}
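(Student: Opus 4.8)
The plan is to read smoothness off directly from the description of the outgoing neighbourhood. By Lemma~\ref{smooth_one_block}, $A$ is smooth if and only if it has a unique normalized supporting functional $f$, and in that case $A^\bot=\ker f$; in general, \eqref{outgoingnbd} presents $A^\bot$ as the union of the kernels $\ker g$ taken over all supporting functionals $g$ of $A$. The elementary fact I would use throughout is that each such $\ker g$ is an $\FF$-hyperplane (of codimension one), so that two supporting functionals have the same kernel exactly when they are proportional; and since they are normalized with $g(A)=\|A\|>0$, proportionality forces them to coincide. Thus smoothness is precisely the statement that $A^\bot$ is a single hyperplane, and in particular a smooth element is nonzero.

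For the forward implication, assume $A$ is smooth, so $A^\bot=\ker f$, and suppose some $B$ satisfies $B^\bot\subseteq A^\bot$. If $B=0$ then $B^\bot=\A\not\subseteq A^\bot$ (as $A\neq 0$), so $B\neq 0$ and $B$ attains its norm. Then every supporting functional $g$ of $B$ satisfies $\ker g\subseteq B^\bot\subseteq\ker f$, and comparing these two hyperplanes forces $\ker g=\ker f$. Hence, by \eqref{outgoingnbd}, $B^\bot=\bigcup_g\ker g=\ker f=A^\bot$, so the inclusion cannot be strict, and no $B$ with $B^\bot\subsetneq A^\bot$ exists.

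For the converse I would argue contrapositively and exhibit an explicit witness. If $A=0$ the claim is trivial, so assume $A\neq 0$ is not smooth; by Lemma~\ref{smooth_one_block} it then fails to have a \emph{unique} normalized supporting functional, and since $A\neq 0$ has at least one, it has two distinct ones $f\neq g$, which by the remark above are non-proportional, so $\ker f\neq\ker g$. Now fix a normalized norm-attaining vector of $A$, which by Lemma~\ref{M_0(A)} lies in a single block, say $x_i\in M_0(A_i)$ with $\|A_i\|=\|A\|$, and set $B:=0_{n_1}\oplus\dots\oplus(A_ix_i)x_i^\ast\oplus\dots\oplus 0_{n_\ell}$. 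Using the norm-attainment computation for matrix units carried out just before the lemma, $B$ is a smooth rank-one element whose unique supporting functional is $f_{x_i}\colon X\mapsto\frac1{\|A\|}\langle A_ix_i,X_ix_i\rangle_{\FF}$, which is itself a supporting functional of $A$; hence $B^\bot=\ker f_{x_i}\subseteq A^\bot$. Since $A$ has a further supporting functional with kernel different from $\ker f_{x_i}$, that kernel lies in $A^\bot$ but not in $B^\bot$, so the inclusion is strict, producing the desired $B$.

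The only genuinely delicate point is the converse construction, namely verifying that the rank-one block $B=(A_ix_i)x_i^\ast$ is smooth with precisely the prescribed supporting functional $f_{x_i}$; this is where Lemma~\ref{smooth_one_block} (equivalently, the explicit norm analysis of matrix units) does the real work, everything else being the formal hyperplane comparison. I note that, alternatively, the entire lemma can be extracted from Lemma~\ref{lengthmaximal} by tracking $\KK_i$-dimensions: an inclusion $B^\bot\subseteq A^\bot$ forces $M_0^\ast(B_i)\subseteq M_0^\ast(A_i)$ for all $i$, whence $\sum_i\dim_{\KK_i}M_0^\ast(B_i)\le\sum_i\dim_{\KK_i}M_0^\ast(A_i)$, and smoothness is exactly the assertion that the right-hand sum equals $1$, i.e.\ that it cannot be strictly lowered by any nonzero $B$; this mirrors the proof of Lemma~\ref{right}.
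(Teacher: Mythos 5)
Your proof is correct, but it takes a genuinely different route from the paper's. For the forward direction the paper never invokes functionals: it feeds $B^\bot\subseteq A^\bot$ into Lemma~\ref{lengthmaximal}, concludes that $M_0^\ast(B_k)$ is trivial for $k\neq i$ while $M_0^\ast(B_i)\subseteq M_0^\ast(A_i)$ is at most one-dimensional with $B_i$ a scalar multiple of $A_i$ there, and then reads off from Lemma~\ref{M_0(A)}(i) that $B^\bot$ is either $\A$ or exactly $A^\bot$. You instead observe that smoothness makes $A^\bot$ a single hyperplane $\ker f$, that \eqref{outgoingnbd} writes $B^\bot$ as a union of hyperplanes, and that an inclusion of one hyperplane in another forces equality; this is shorter, and it isolates the purely Banach-space content (it would work in any finite-dimensional normed space once ``smooth $=$ unique supporting functional'' is available, with only the witness construction below being $C^\ast$-specific). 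For the converse, both you and the paper manufacture single-block rank-one witnesses, but strictness is certified differently: the paper produces \emph{two} witnesses (single-block truncations of $A$ when two blocks attain the norm, or $(Ax)x^\ast$ and $(Ay)y^\ast$ for orthogonal norm-attaining $x,y$ when one block has $\dim M_0>1$), notes their outgoing neighbourhoods are distinct and both contained in $A^\bot$, and concludes at least one inclusion is proper; you produce \emph{one} witness $B=(A_ix_i)x_i^\ast$ and get properness from duality, via a second supporting functional of $A$ whose kernel is a different hyperplane inside $A^\bot$ not contained in $B^\bot=\ker f_{x_i}$. Both mechanisms are sound; yours leans harder on Lemma~\ref{smooth_one_block}, the paper's stays inside the $M_0^\ast$ calculus. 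Two small imprecisions to repair: a norm-attaining vector of $A$ need not lie in a single block (if several blocks attain the norm, sums of norm-attaining vectors from different blocks are again norm-attaining), so you should say you \emph{choose} $x_i\in M_0(A_i)$ for some block with $\|A_i\|=\|A\|$, which Lemma~\ref{M_0(A)}(ii) permits; and your closing remark that the whole lemma ``can be extracted from Lemma~\ref{lengthmaximal} by tracking dimensions'' covers only the forward implication (that is precisely the paper's route) --- the converse still requires exhibiting some $B$ whose outgoing neighbourhood is strictly smaller, which no dimension count alone provides.
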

\begin{proof} We assume the usual matrix decompositions \eqref{eq:C*(complex)}--\eqref{eq:C*(real)} of $\A$. Let $A\in\A$ be smooth, achieving its norm only on $i$-th component, and let $B=\bigoplus_{k=1}^\ell B_k$ satisfy $B^\bot\subsetneq A^\bot$. Notice that $M_0^*(B_k)$ is a $\mathbb K_k$-subspace of $\KK_k^{n_k}$ for each $k$. By Lemma \ref{lengthmaximal}, $B^\bot\subsetneq A^\bot$ implies  $\mathrm{dim}_{\KK_k}(M_0^*(B_k))\leq \mathrm{dim}_{\KK_k}(M_0^*(A_k))$. Thus, for all $k\neq i$ we have $\mathrm{dim}_{\KK_k}(M_0^*(B_k))=0$, i.e., $B_k=0$, while  for $k=i$  we have $\mathrm{dim}_{\KK_i}(M_0^*(B_i))\leq 1$ with $M_0^\ast(B_i)\subseteq M_0^\ast(A_i)$. So, either $B_i=0$ or $M_0^\ast(B_i)= M_0^\ast(A_i)$ and $B_i|_{M_0^\ast(A_i)}=\alpha A_i|_{M_0^\ast(A_i)}$ for some non-zero $\alpha\in\FF$. The former case gives $B^\bot=0^\bot=\A$, while  the later case gives $B^\bot= A^\bot$ by  (i) of  Lemma~\ref{M_0(A)}. So $B^\bot\subsetneq A^\bot$ is not possible.

Conversely, assume there does not exist $B\in\A$ such that $B^\bot\subsetneq A^\bot$ and   suppose, if possible, that $A$ is not smooth. Then either there exists two distinct $i, j$ such that $\|A_i\|=\|A_j\|=\|A\|$ or there exists $j$ with $\|A_j\|=\|A\|$ but $\mathrm{dim}_{\mathbb K_j}(M_0(A_j))
>1$. In the first case, by Lemma \ref{M_0(A)}(i),  
 $$\bigg(\big(\bigoplus_{k=1}^i 0_{n_k}\big)\oplus A_i\oplus\big(\bigoplus_{k=i+1}^\ell 0_{n_k}\big)\bigg)^\bot\neq \bigg(\big(\bigoplus_{k=1}^j 0_{n_k}\big)\oplus A_j\oplus\big(\bigoplus_{k=j+1}^\ell 0_{n_k}\big)\bigg)^\bot$$
are  both  properly contained in $A^\bot$ because only the first contains $\big(\bigoplus_{k=1}^j 0_{n_k}\big)\oplus A_j\oplus\big(\bigoplus_{k=j+1}^\ell 0_{n_k}\big)$.
In the second case, let  $x, y\in M_0(A_j)$ be  $\mathbb K_j$-linearly independent. By applying Gram-Schmidt we can assume that their $\KK_j$-valued inner product, $\langle x,y\rangle:=y^\ast x=0\in\KK_j$. Then, by Lemma \ref{M_0(A)}(i), $$\bigg(\big(\bigoplus_{k=1}^i 0_{n_k}\big)\oplus (Ax)x^*\oplus\big(\bigoplus_{k=i+1}^\ell 0_{n_k}\big)\bigg)^\bot\neq \bigg(\big(\bigoplus_{k=1}^i 0_{n_k}\big)\oplus (Ay)y^*\oplus\big(\bigoplus_{k=i+1}^\ell 0_{n_k}\big)\bigg)^\bot$$  are both properly contained in $A^\bot$ because only the first one  contains $\big(\bigoplus_{k=1}^i 0_{n_k}\big)\oplus (Ay)y^*\oplus\big(\bigoplus_{k=i+1}^\ell 0_{n_k}\big)$. 
Either case is contradictory. So $A$ is a smooth element.
\end{proof}

 \section{BJ orthogonality in pseudo-abelian $C^\ast$-algebra}\label{section3}

In the next lemma  we show that BJ orthogonality characterizes the underlying field in case of finite-dimensional abelian $C^*$-algebra $\A=\bigoplus_{k=1}^\ell \mathcal M_1(\mathbb F)=\mathbb F^\ell$ over the field $\mathbb F$. As usual, we prefer to  write its elements as sequences, though we might still use $\oplus$ notation. We will use the notation $$\mathcal R_\A=\{A;\;\; A\text{ is right-symmetric element in } \A\}.$$

\begin{lemma}\label{maximal-Rn}
    Let $\A=\mathbb F^\ell$ be a finite-dimensional abelian  $C^*$-algebra over the field $\mathbb F$ with $\ell\geq 2$. Then, the set $$\{A^\bot;\;\; A\in \mathcal R_\A\setminus\{0\}\}$$
   has finitely many elements in case of $\mathbb F=\mathbb R$ and infinitely many in case of $\mathbb F=\mathbb C$. They are indexed by the tuples $(1,\pm1,\dots,\pm1)$ in case of $\RR^\ell$ and are indexed by  $(1,e^{i\theta_2},\dots,e^{i\theta_\ell})$ for $\theta_k\in[0,2\pi)$ in case of $\CC^\ell$.
\end{lemma}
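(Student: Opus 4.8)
The plan is to reduce the statement to an injectivity property of the map $U\mapsto U^\bot$ defined on normalized unitaries, and then to read off the cardinalities. First, by Lemma \ref{right} the nonzero right-symmetric elements of $\A=\FF^\ell$ are exactly the nonzero scalar multiples of unitaries, and a unitary in $\FF^\ell=\mathcal M_1(\FF)\oplus\dots\oplus\mathcal M_1(\FF)$ is precisely a tuple $U=(u_1,\dots,u_\ell)$ with $|u_k|=1$ for all $k$. Since Birkhoff--James orthogonality is homogeneous, $A^\bot=(\lambda U)^\bot=U^\bot$ for every $\lambda\in\FF\setminus\{0\}$, so $A^\bot$ depends only on the direction of $U$; multiplying by a suitable scalar I may always normalize the first coordinate to $1$. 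Thus every member of $\{A^\bot;\;A\in\mathcal R_\A\setminus\{0\}\}$ equals $U^\bot$ for a normalized tuple $U=(1,u_2,\dots,u_\ell)$, the admissible tuples being $(1,\pm1,\dots,\pm1)$ when $\FF=\RR$ (so $2^{\ell-1}$ of them) and $(1,e^{i\theta_2},\dots,e^{i\theta_\ell})$ with $\theta_k\in[0,2\pi)$ when $\FF=\CC$ (infinitely many). It therefore suffices to prove that distinct normalized unitaries $U\ne V$ yield distinct outgoing neighborhoods $U^\bot\ne V^\bot$.

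To this end I record a workable description of $U^\bot$. Since $U$ is unitary, $M_0(U)=\widehat{\KK}=\FF^\ell$, so by Lemma \ref{M_0(A)}(i) an element $B=(b_1,\dots,b_\ell)$ lies in $U^\bot$ exactly when there is a nonzero $x=(x_1,\dots,x_\ell)$ with $\langle Ux,Bx\rangle_\FF=0$. Writing this out, $\langle Ux,Bx\rangle_\FF=\sum_{k=1}^\ell\langle u_kx_k,b_kx_k\rangle_\FF$ equals $\sum_k\overline{b_k}u_k|x_k|^2$ when $\FF=\CC$ and $\sum_k b_ku_kx_k^2$ when $\FF=\RR$. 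Putting $t_k=|x_k|^2\ge0$ (resp. $t_k=x_k^2\ge0$), which may be prescribed as arbitrary nonnegative numbers not all zero, I obtain: $B\in U^\bot$ if and only if $0$ lies in the convex hull of the finite set $\{\overline{b_k}u_k;\;1\le k\le\ell\}\subseteq\CC$ (resp. $\{b_ku_k\}\subseteq\RR$). Equivalently, by the separating-hyperplane theorem (Gordan's alternative), $B\notin U^\bot$ precisely when all the points $\overline{b_k}u_k$ lie in a common open half-plane of $\CC$ through the origin (resp., in the real case, when the scalars $b_ku_k$ are all strictly positive or all strictly negative).

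With this description injectivity is immediate in the real case. If $U\ne V$ are normalized real unitaries then $u_1=v_1=1$ while $u_j=-v_j$ for some $j$. Testing $B=V$: the products $u_kv_k$ include $u_1v_1=1>0$ and $u_jv_j=-1<0$, so $0$ lies in their convex hull and $V\in U^\bot$; but $v_kv_k=1>0$ for every $k$, so $V\notin V^\bot$. Hence $U^\bot\ne V^\bot$.

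The complex case is the crux of the argument, and exhibiting a single element of $U^\bot\triangle V^\bot$ is where the real work lies. Set $\delta_k=\overline{v_k}u_k$, so $|\delta_k|=1$, $\delta_1=1$, and $\delta_j\ne1$ for some $j\ge2$. Parametrizing $B$ through $b_k=u_kw_k$, the condition ``$B\notin U^\bot$'' becomes ``$\{w_k\}$ lie in a common open half-plane'', while ``$B\in V^\bot$'' becomes ``$\{\delta_kw_k\}$ do not''. I will choose $w_j=1$, $w_1=-\delta_j$, and $w_k=1$ for the remaining indices. Then the only directions occurring among the $w_k$ are $1$ and $-\delta_j$, which are non-antipodal because $\delta_j\ne1$, so they lie in a common open half-plane and $B\notin U^\bot$. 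On the other hand $\delta_1w_1=-\delta_j$ and $\delta_jw_j=\delta_j$ form an antipodal pair, so the family $\{\delta_kw_k\}$ cannot be enclosed in any open half-plane, whence $0$ lies in its convex hull and $B\in V^\bot$. This produces $B\in V^\bot\setminus U^\bot$, so $U^\bot\ne V^\bot$. Conceptually, the obstacle one must overcome is exactly that the diagonal unitary $\mathrm{diag}(\delta_1,\dots,\delta_\ell)$ preserves the ``common open half-plane'' configuration only when it is a scalar. Combining the two cases, the map $U\mapsto U^\bot$ is injective on normalized unitaries, and the cardinality and indexing assertions follow from the enumeration of normalized tuples recorded above.
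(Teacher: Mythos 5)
Your proof is correct, and its first half coincides with the paper's: both invoke Lemma \ref{right} together with homogeneity of BJ orthogonality to reduce the problem to showing that distinct unitaries, normalized so that the first coordinate is $1$, have distinct outgoing neighborhoods. You diverge at that injectivity step. The paper settles it in one line by citing Lemma \ref{lengthmaximal}: from $A^\bot=B^\bot$ it extracts a single scalar $\lambda$ with $(1,\alpha_2,\dots,\alpha_\ell)=\lambda(1,\beta_2,\dots,\beta_\ell)$, whence $\lambda=1$. You instead derive, directly from Lemma \ref{M_0(A)}(i), the convex-geometric description ``$B\in U^\bot$ iff $0\in\mathrm{conv}\{\overline{b_k}u_k\}$'' (resp.\ $0\in\mathrm{conv}\{b_ku_k\}\subseteq\RR$), valid because a unitary attains its norm on every vector, and then construct explicit witnesses in $V^\bot\setminus U^\bot$; your computations (the identity $\langle Ux,Bx\rangle_{\FF}=\sum_k\overline{b_k}u_k|x_k|^2$, the separation argument, and the antipodal pair $\delta_1w_1=-\delta_j$, $\delta_jw_j=\delta_j$ against the non-antipodal pair $\{1,-\delta_j\}$) all check out. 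What each approach buys: the paper's route is shorter and recycles a structural lemma used throughout; yours is self-contained, elementary, and produces concrete separating elements. It also has a genuine robustness advantage. Lemma \ref{lengthmaximal} as literally stated yields only blockwise scalars $\alpha_i$ of \emph{equal modulus} (its proof works one block at a time and never compares phases across blocks), and for normalized unitaries that information is vacuous, since $|\alpha_i|=|\beta_i|=1$ always; the single-scalar conclusion quoted in the paper's proof is what is actually needed. Indeed, your own example $U=(1,1)$, $V=(1,-1)$ — blockwise unimodular multiples of one another, yet with $V\in U^\bot\setminus V^\bot$ — shows the equal-modulus version cannot by itself imply the distinctness of neighborhoods. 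Your direct construction establishes the required injectivity without leaning on that stronger reading, so it serves as an independent verification of the lemma's conclusion.
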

\begin{proof} Any non-zero right symmetric element is a multiple of unitary by Lemma \ref{right}. Also, $A^\bot=(\lambda A)^\bot$ for $\lambda\in\FF\setminus\{0\}$ because BJ orthogonality is homogeneous. So, we have
$$\{A^\bot;\;\; A\in \mathcal R_\A\}=\{(1,\alpha_{2},\dots,\alpha_{\ell})^\bot ;\;\; |\alpha_i|= 1\}\cup\{\mathbb F^\ell=0^\bot\}.$$  As for the fact that $(1,\alpha_2,\dots,\alpha_{\ell})$ with $|\alpha_i|=1$ have different outgoing neighborhoods, notice that if $A=(1,\alpha_2,\dots,\alpha_{\ell})$ and $B=(1,\beta_2,\dots,\beta_{\ell})$ satisfy $A^\bot=B^\bot$, then by Lemma \ref{lengthmaximal}, there exists $\lambda\in\mathbb F$ such that $$(1,\alpha_2,\dots,\alpha_{\ell})=\lambda (1,\beta_2,\dots,\beta_{\ell}),$$ which implies $\lambda=1$ and $A=B$.\end{proof}

We remark that Lemma \ref{maximal-Rn} does not hold if $\ell=1$ because $\mathcal M_1(\mathbb R)$ and $\mathcal M_1(\mathbb C)$ are BJ isomorphic (see \cite[Example 2.2]{simple}). In the next lemma we give a complete characterization of $\FF^\ell$ among the finite-dimensional pseudo-abelian $C^*$-algebras. We also provide a formula for the dimension of a complex abelian $C^*$-algebra which  uses nothing but  BJ orthogonality relation. It is simpler than the one valid in general normed spaces, see \cite[Theorem 1.1 and Remark 1.2]{guterman}. For simplicity we denote $1$-by-$1$ blocks ${\mathcal M}_1(\KK)$ simply as $\KK$.

\begin{lemma}\label{dimvsleft}
    Let $\A=\KK_1\oplus\dots\oplus\KK_\ell$ be a finite-dimensional pseudo-abelian $C^\ast$-algebra over the field $\FF$. If $\A=\FF^\ell$, then
     \begin{equation}\label{dimvslefteq}\dim\A=|\{A^\bot;\;\;A\in\mathcal L_\A\setminus\{0\}\}|.
     \end{equation} However, if $\FF=\RR$ and one of $\KK_i\in\{\CC, \HH\}$, then $\{A^\bot;\;\;A\in\mathcal L_\A\setminus\{0\}\}$ is an infinite set.
\end{lemma}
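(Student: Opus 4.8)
The plan is to first determine the non-zero left-symmetric elements of $\A$ and then to compute their outgoing neighborhoods $A^\bot$ explicitly. Since $\A$ is pseudo-abelian, every block has size one and $\A=\widehat{\mathbb K}=\KK_1\oplus\dots\oplus\KK_\ell$; in the notation of Lemma~\ref{left} this is the case $p=\ell$, so the non-zero left-symmetric elements are exactly those supported on a single coordinate. Thus I may write any $A\in\mathcal L_\A\setminus\{0\}$ as $A=\alpha e_i$, where $e_i$ is the identity of the $i$-th block, $\alpha\in\KK_i\setminus\{0\}$ and $1\le i\le\ell$. Because BJ orthogonality is homogeneous over $\FF$, the set $\{A^\bot;\;A\in\mathcal L_\A\setminus\{0\}\}$ is governed entirely by how $A^\bot$ varies with the coordinate $i$ and with the direction of $\alpha$.

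Next I would compute $A^\bot$ for $A=\alpha e_i$. As $A$ has a single non-zero block $A_i=\alpha$, an invertible scalar of $\KK_i$, Lemma~\ref{M_0(A)} gives $M_0(A)=0\oplus\dots\oplus\KK_i\oplus\dots\oplus 0$, the full $i$-th coordinate. For a norming unit vector $x=e_i\gamma$ with $\gamma\in\KK_i$, $|\gamma|=1$, and any $B=\bigoplus_k B_k$ with $i$-th entry $\beta_i$, only the $i$-th coordinate contributes to the inner product, and identity~\eqref{eq:inner-product-equality} yields
$$\langle Ax,Bx\rangle_{\FF}=\langle\alpha\gamma,\beta_i\gamma\rangle_{\FF}=|\gamma|^2\langle\alpha,\beta_i\rangle_{\FF}=\langle\alpha,\beta_i\rangle_{\FF}.$$
The decisive feature is that this quantity is independent of the unit $\gamma$ — in the quaternionic case this is precisely the conjugation-invariance of the real part recorded in~\eqref{eq:inner-product-equality}. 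Hence by Lemma~\ref{M_0(A)}(i),
$$A^\bot=\{B\in\A;\;\langle\alpha,\beta_i\rangle_{\FF}=0\},$$
a condition on the coordinate $\beta_i$ alone, the other coordinates of $B$ being free.

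For the first assertion, suppose $\A=\FF^\ell$, so $\KK_i=\FF$ for each $i$. Then $\langle\alpha,\beta_i\rangle_{\FF}=0$ reads $\alpha\beta_i=0$ if $\FF=\RR$ and $\overline{\beta_i}\alpha=0$ if $\FF=\CC$; as $\alpha\neq 0$, either forces $\beta_i=0$. Therefore $A^\bot=\{B;\;\beta_i=0\}$ is the $i$-th coordinate hyperplane, independent of $\alpha$, and these hyperplanes are pairwise distinct as $i$ ranges over $1,\dots,\ell$ (indeed $e_j\in e_i^\bot$ exactly when $j\neq i$). Consequently $\{A^\bot;\;A\in\mathcal L_\A\setminus\{0\}\}$ has exactly $\ell=\dim\A$ members, which is~\eqref{dimvslefteq}.

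For the second assertion, suppose $\FF=\RR$ and $\KK_{i_0}\in\{\CC,\HH\}$ for some $i_0$, and set $d=\dim_\RR\KK_{i_0}\ge 2$. For $A=\alpha e_{i_0}$ with $\alpha\in\KK_{i_0}\setminus\{0\}$ we obtain $A^\bot=\{B;\;\Re(\overline{\beta_{i_0}}\alpha)=0\}$, and under the identification $\KK_{i_0}\cong\RR^{d}$ the expression $\Re(\overline{\beta_{i_0}}\alpha)$ is just the standard real inner product of $\beta_{i_0}$ and $\alpha$. Thus $A^\bot$ is determined by the real hyperplane $\{u\in\RR^{d};\;\langle u,\alpha\rangle=0\}$, which depends only on the line $\RR\alpha$; since $d\ge 2$ there are infinitely many such lines and hence infinitely many distinct sets $A^\bot$, proving that $\{A^\bot;\;A\in\mathcal L_\A\setminus\{0\}\}$ is infinite. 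The one genuinely delicate point in the argument is the coordinate-independence of $\langle Ax,Bx\rangle_{\FF}$ over the choice of norming vector, which reduces BJ orthogonality to a single linear condition; the finite/infinite dichotomy then reflects exactly whether $\dim_\RR\KK_i$ equals $1$ or exceeds it.
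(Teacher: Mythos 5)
Your proposal is correct, and it shares the paper's starting point---Lemma~\ref{left} reduces the problem to elements supported on a single block---but from there it proceeds by a genuinely different mechanism. The paper never computes $A^\bot$ in closed form: it distinguishes neighborhoods attached to different coordinates via Lemma~\ref{lengthmaximal} (their norm-attaining sets $M_0$ differ), handles a fixed coordinate of $\FF^\ell$ by homogeneity alone, and proves the infinitude statement by exhibiting the ad hoc family $A_\lambda=(1+\lambda\boldsymbol{i},0,\dots,0)$ together with the explicit witnesses $(1-\tfrac{1}{\lambda}\boldsymbol{i},0,\dots,0)$ that separate their neighborhoods. You instead combine Lemma~\ref{M_0(A)}(i) with the invariance identity~\eqref{eq:inner-product-equality} to show that for $A=\alpha e_i$ the quantity $\langle Ax,Bx\rangle_{\FF}$ does not depend on the choice of norming vector $x\in M_0(A)$, which upgrades the existential condition of Lemma~\ref{M_0(A)}(i) to the single linear condition $A^\bot=\{B;\;\langle\alpha,\beta_i\rangle_{\FF}=0\}$. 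Both halves of the lemma then fall out of this one description: over $\FF^\ell$ the condition collapses to $\beta_i=0$ independently of $\alpha$, giving exactly $\ell=\dim\A$ coordinate hyperplanes, while over a block $\KK_{i_0}\in\{\CC,\HH\}$ the neighborhoods are parametrized by the real lines $\RR\alpha\subseteq\KK_{i_0}\cong\RR^d$ with $d\ge2$, of which there are infinitely many. What your route buys is a complete, uniform parametrization of $\{A^\bot;\;A\in\mathcal L_{\A}\setminus\{0\}\}$ that dispenses with Lemma~\ref{lengthmaximal} and with any hand-picked witnesses; what the paper's route buys is brevity, since it reuses machinery already established for other purposes. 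The only step you should state a touch more explicitly is that distinct lines $\RR\alpha\neq\RR\alpha'$ yield distinct sets $A^\bot$: this is immediate because intersecting $A^\bot$ with the $i_0$-th block recovers the hyperplane $\{u\in\KK_{i_0};\;\Re(\overline{u}\alpha)=0\}$, and distinct lines have distinct orthogonal hyperplanes in $\RR^d$.
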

\begin{proof} By Lemma~\ref{left}, the set of all non-zero left-symmetric elements in $\A$ is given by (recall that $0^n$ denotes $n$ repeated zeros)
 $$\{(0^{k-1},\alpha_k,0^{\ell-k})
 ;\;\; 1\leq k\leq\ell\text{ and }\alpha_k\in\KK_k\setminus\{0\}\}.$$
Now, all non-zero  $\FF$-multiples of an element share the same outgoing neighbourhood. We further note that, by Lemma \ref{lengthmaximal}(i), $i\neq j$ and $\alpha_i\in\mathbb K_i\setminus\{0\},\alpha_j\in\mathbb K_j\setminus\{0\}$ imply $$A^\bot=(0,\dots,0,\alpha_i,0,\dots,0)^\bot\neq (0,\dots,0,\alpha_j,0,\dots,0)^\bot=B^\bot,$$  (because $M_0(A)\neq M_0(B)$). Therefore, if $\A=\RR^\ell$ or $\CC^\ell$, then \eqref{dimvslefteq} holds.

Finally, let $\A=\KK_1\oplus\dots\oplus\KK_\ell$ be $C^*$-algebra over $\RR$ with $\KK_1\in\{\CC, \HH\}$. Then $$A_\lambda=(1+\lambda \boldsymbol{i},0,\dots,0);\qquad (\boldsymbol{i}^2=-1\in\RR)$$ are left-symmetric elements for all $\lambda\in\mathbb R\setminus\{0\}$ by Lemma \ref{left}. However, $A_\lambda^\bot\neq A_\mu^\bot$ for  $\lambda\neq \mu$ because $\big(1-\frac{1}{\lambda} \boldsymbol{i},0,\dots,0\big)\in (1+\mu \boldsymbol{i},0,\dots,0)^\bot$ if and only if $\mu= \lambda$.\end{proof}

As a direct consequence of Lemma \ref{maximal-Rn} with its proof, and Lemma \ref{dimvsleft}, we get the following corollary which characterizes the underlying fields in an pseudo-abelian $C^*$-algebra with the help of  BJ orthogonality.

\begin{corollary}\label{lem:Rn-vs-Cn}
Let $\A$ be pseudo-abelian $C^\ast$-algebra over the field $\FF$ with $\dim_{\FF}\A\geq 2$. Then $\FF=\CC$ if and only if  \begin{equation*}|\{A^\bot;\;\;A\in\mathcal L_\A\}|<\infty\quad \text{ and }\quad |\{A^\bot;\;\;A\in\mathcal R_\A\}|=\infty.\end{equation*} Moreover, $\FF=\RR$ if and only if  either both  sets are infinite or else they are both finite.
\end{corollary}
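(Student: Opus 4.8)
The plan is to compute, for each structural type of a pseudo-abelian $\A$, whether $\{A^\bot;\;A\in\mathcal L_\A\}$ and $\{A^\bot;\;A\in\mathcal R_\A\}$ are finite or infinite, and then to read the two stated equivalences off the resulting table. Writing $\A=\KK_1\oplus\dots\oplus\KK_\ell$ with each $\KK_i$ a skew-field, I would distinguish three types: (a) $\FF=\CC$, which forces every $\KK_i=\CC$, so $\A=\CC^\ell$ with $\ell\geq2$; (b) $\FF=\RR$ with every $\KK_i=\RR$, so $\A=\RR^\ell$ with $\ell\geq2$; and (c) $\FF=\RR$ with at least one $\KK_{i}\in\{\CC,\HH\}$. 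Since $0$ is both left- and right-symmetric and $0^\bot=\A$, passing between $\mathcal L_\A$ and $\mathcal L_\A\setminus\{0\}$ (and likewise for $\mathcal R_\A$) adds at most the single member $\A$ and so never affects finiteness; I would use this silently.

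For the left-hand counts I would quote Lemma~\ref{dimvsleft} directly: in types (a) and (b) the algebra is $\FF^\ell$, so $|\{A^\bot;\;A\in\mathcal L_\A\setminus\{0\}\}|=\dim\A<\infty$, while in type (c) the presence of a $\CC$ or $\HH$ block makes this set infinite. For the right-hand counts in the two abelian types I would quote Lemma~\ref{maximal-Rn}: in type (b) the set is finite (the neighborhoods indexed by $(1,\pm1,\dots,\pm1)$, together with $\A$), and in type (a) it is infinite (indexed by $(1,e^{i\theta_2},\dots,e^{i\theta_\ell})$).

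The one point not covered by the cited lemmas — and the step I expect to be the main obstacle — is the right-hand count in type (c), where a $\CC$ or $\HH$ block is present but $\A$ is no longer of the form $\FF^\ell$; here I must show the set is infinite. When $\ell=1$ the algebra is a single $\CC$ or $\HH$, which as a real normed space is a Euclidean (Hilbert) space, so BJ orthogonality is symmetric, every element is right-symmetric, and the orthogonal complements of distinct directions give infinitely many distinct neighborhoods. When $\ell\geq2$ I would instead adapt the distinctness argument in the proof of Lemma~\ref{maximal-Rn}: by Lemma~\ref{right} every nonzero right-symmetric element is a real multiple of a unitary $u_1\oplus\dots\oplus u_\ell$ with each $u_k$ unitary in $\KK_k$, and fixing a block $\KK_{i_0}\in\{\CC,\HH\}$ while letting $u_{i_0}$ range over a continuum of unit elements (with the remaining coordinates held fixed) produces a continuum of unitaries. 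That distinct members of this family have distinct outgoing neighborhoods follows as in Lemma~\ref{maximal-Rn} via Lemma~\ref{lengthmaximal}, since for two such unitaries one has $M_0^\ast(u_k)=\KK_k$ for every $k$, so $u^\bot=w^\bot$ forces $u_k=\alpha_k w_k$ with the $\alpha_k\in\FF=\RR$ of a common modulus one, pinning the continuum parameter down to at most finitely many values. Hence the right-hand set is infinite in type (c).

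Assembling the three types yields the table: (a) has the left set finite and the right set infinite; (b) has both finite; (c) has both infinite; and the fourth combination, left infinite with right finite, never occurs. Reading this off, $\FF=\CC$ holds exactly when the left set is finite and the right set is infinite (the signature of type (a) alone), while $\FF=\RR$ holds exactly in types (b) and (c), i.e.\ exactly when the two sets are both finite or both infinite. This is precisely the assertion of the corollary.
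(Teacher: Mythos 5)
Your proposal is correct and takes essentially the same route as the paper, which derives the corollary as a ``direct consequence of Lemma~\ref{maximal-Rn} \emph{with its proof}, and Lemma~\ref{dimvsleft}'': your three-type table is exactly this case analysis. The step you flagged as the main obstacle --- the right-symmetric count when $\FF=\RR$ and some block is $\CC$ or $\HH$ --- is precisely what the paper's phrase ``with its proof'' refers to, and your argument (right-symmetric elements are scalar multiples of unitaries by Lemma~\ref{right}, distinctness of their outgoing neighborhoods via Lemma~\ref{lengthmaximal}, with the real scalar pinned to $\pm1$) is the intended completion of that step.
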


Since the above corollary characterizes the underlying field and complex pseudo-abelian finite-dimensional $C^*$-algebras are completely determined by their dimension, which is given by formula \eqref{dimvslefteq}, we already got  a complete BJ characterization of  complex finite-dimensional pseudo-abelian  $C^*$-algebras. It remains to focus on real finite-dimensional pseudo-abelian $C^*$-algebras, where we still need to compute the number of  blocks over reals, over complexes, and over quaternions. This will be done by carefully counting the cardinality associated with finite collections of smooth elements $A_1,\dots, A_s\in\A$. By convention, if $s=0$  we let $\bigcap_{k=1}^sA_k^\bot:=\A$.

\begin{lemma}\label{lem:CCandHH}
Let $\A\in\{\mathcal M_1(\CC), \mathcal M_1(\HH)\}$ be a real $C^\ast$-algebra. Then, there exist finitely many smooth elements $A_1,\dots, A_s$ in $\mathcal A$ such that $$|\{B^\bot;\;\; B\in\mathcal L_{\bigcap_{k=1}^sA_k^\bot\setminus\{0\}}\}|<\infty.$$ If $s$ is the minimal such number, then $s=1$ in case of $\A=\mathcal M_1(\CC)$ and $s=3$ in the case of $\A=\mathcal M_1(\HH)$.  In both cases, $\bigcap_{k=1}^sA_k^\bot$ is a one-dimensional real vector space.
\end{lemma}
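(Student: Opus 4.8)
The plan is to exploit the fact that both $\mathcal M_1(\CC)$ and $\mathcal M_1(\HH)$, regarded as real normed spaces, are \emph{Euclidean}: the $C^\ast$-norm $\|q\|=|q|$ satisfies $\|q\|^2=\langle q,q\rangle_\RR$, so the underlying space is a real Hilbert space of real dimension $2$ (resp.\ $4$). Consequently BJ orthogonality reduces to ordinary inner-product orthogonality and, in particular, is symmetric: $v^\bot={}^\bot v$ for all $v$. Two consequences drive the whole argument. First, every non-zero element is smooth (a Hilbert space is smooth at every point; alternatively this follows from the definition, since $\A$ has a single block and $M_0(A_1)=\KK_1$ is one-dimensional over $\KK_1$), so we are free to choose the $A_k$ to be any prescribed non-zero vectors. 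Second, since orthogonality is symmetric, left-symmetricity relative to \emph{any} subset $\mathcal S$ is automatic: $v^\bot\cap\mathcal S={}^\bot v\cap\mathcal S$ for every $v\in\mathcal S$, whence $\mathcal L_{\mathcal S}=\mathcal S$. Thus the quantity to control is simply
\[
|\{B^\bot;\;\;B\in\mathcal S\}|,\qquad \mathcal S=\Big(\bigcap_{k=1}^sA_k^\bot\Big)\setminus\{0\}.
\]

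Next I would translate this cardinality into linear algebra. For a non-zero $A$ the set $A^\bot$ is, by Lemma~\ref{M_0(A)}(i) together with the conjugation identity recorded in and below \eqref{eq:inner-product-equality}, exactly the Euclidean orthogonal complement $\{w;\;\;\langle w,A\rangle_\RR=0\}$, a real hyperplane of codimension $1$. The only point requiring care is the quaternionic case: one checks that $\Re(\bar x\,\bar w q\,x)=\Re(\bar w q)$ for every unit $x$ (conjugation by a unit quaternion fixes the real part), so the existence of a norm-attaining $x$ with $\langle qx,wx\rangle_\RR=0$ is equivalent to $\Re(\bar w q)=0$, independent of $x$. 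Moreover, in a real Hilbert space $B_1^\bot=B_2^\bot$ holds iff $\RR B_1=\RR B_2$; hence $|\{B^\bot;\;\;B\in\mathcal S\}|$ equals the number of distinct real lines through the origin meeting $\mathcal S$. Writing $W=\bigcap_{k=1}^sA_k^\bot$, which is a real subspace, $\mathcal S=W\setminus\{0\}$ meets finitely many such lines precisely when $\dim_\RR W\le 1$.

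It then remains to pin down the minimal $s$. For the lower bound, each $A_k^\bot$ is a hyperplane, so for any $s$ smooth elements $\dim_\RR\big(\bigcap_{k=1}^sA_k^\bot\big)\ge \dim_\RR\A-s$; if $s\le \dim_\RR\A-2$ this dimension is at least $2$ and $\{B^\bot;\;\;B\in\mathcal S\}$ is infinite. Hence any admissible $s$ satisfies $s\ge\dim_\RR\A-1$, that is $s\ge 1$ for $\CC$ and $s\ge 3$ for $\HH$. For the upper bound, choose $A_1,\dots,A_{\dim_\RR\A-1}$ to be $\RR$-linearly independent non-zero (hence smooth) vectors; then $\bigcap_kA_k^\bot$ is the orthogonal complement of their span and has $\dim_\RR W=1$, so the cardinality equals $1$. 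This realises $s=\dim_\RR\A-1$, which is therefore minimal: $s=1$ for $\mathcal M_1(\CC)$ and $s=3$ for $\mathcal M_1(\HH)$. Finally, whenever $s$ is minimal and the configuration is admissible, the bound $\dim_\RR W\ge\dim_\RR\A-s=1$ combines with the finiteness constraint $\dim_\RR W\le1$ to force $\dim_\RR W=1$, giving the asserted one-dimensional real intersection.

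The only genuinely delicate step is the computation of $A^\bot$ in the quaternionic case; once the Hilbert-space structure is recognized, everything else is a routine dimension count, and the symmetry of orthogonality trivializes the passage to left-symmetric elements.
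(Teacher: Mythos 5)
Your proposal is correct and follows essentially the same route as the paper: both identify BJ orthogonality in $\mathcal M_1(\CC)$ and $\mathcal M_1(\HH)$ with real-inner-product orthogonality (via the conjugation-invariance of the real part, so $A^\bot=\ker\big(x\mapsto\Re(\bar a x)\big)$ is a real hyperplane), observe that $\RR$-linearly independent elements have distinct outgoing neighborhoods so any real subspace of dimension at least two yields infinitely many of them, and finish by a dimension count with the bounds attained at $\{\boldsymbol{i}\}$ resp.\ $\{\boldsymbol{i},\boldsymbol{j},\boldsymbol{k}\}$ (your choice of arbitrary linearly independent vectors is an inessential variant). Your write-up is somewhat more explicit than the paper's on two points the paper leaves tacit --- that symmetry of Hilbert-space orthogonality gives $\mathcal L_{\mathcal S}=\mathcal S$, and that minimality forces $\dim_\RR\bigcap_k A_k^\bot=1$ --- but these are elaborations, not a different method.
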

\begin{proof}
As usual, $\KK$ will denote either the field $\CC$ or the (skew)  field $\HH$. Recall from Lemma \ref{M_0(A)} that, for $A=(a),B=(b)\in{\mathcal M}_1(\KK)$ ($1$-by-$1$ matrices), we have  $A\perp B$ if and only if $\mathrm{Re}\langle A\cdot 1, B\cdot 1\rangle=\mathrm{Re}(\overline{b} a)=0$ (here, $B\cdot 1$ is matrix $B$ applied on a vector $1\in\KK$).
Therefore, $A^\bot$ coincides with the kernel of the $\RR$-linear functional  $f_A\colon{\mathcal M}_1(\KK)\to\RR$, given by $f_A\colon X=(x)\mapsto  \mathrm{Re}(\overline{x}a)=\mathrm{Re}(\overline{a}x)$. Notice also that the map $W\mapsto f_W$  from $\mathcal M_1(\KK)$ to $\mathrm{Hom}_{\RR}(\mathcal M_1(\KK),\RR)$ is   $\RR$-linear  with zero kernel, because for $W=(w)$, we have  $f_W(W)=\mathrm{Re}(\bar{w}w)=|w|^2=0$ if and only if $W=0$. It implies that $A^\bot=\ker f_A$ and $B^\bot=\ker f_B$ are different whenever $A,B$ are $\RR$-linearly independent. Therefore, if ${\mathcal V}\subseteq\mathcal M_1(\KK)$ is a real subspace of dimension at least two, and $A=(a),B=(b)\in{\mathcal V}$ are $\RR$-linearly independent, then $(A+\lambda_1 B)$ and $(A+\lambda_2 B)$ are $\RR$-linearly independent for $\lambda_1\neq\lambda_2$ and as such $(A+\lambda_1 B)^\bot\neq (A+\lambda_2 B)^\bot$. Thus,  the cardinality of $\{A^
\bot;\;\; A\in{\mathcal V}\}$ is infinite. This shows that $s\ge 1$ in case $\KK=\CC$ and $s\ge 3$ in case of $\KK=\HH$. The inequalities are achieved, for example by using $\{\boldsymbol{i}\}$ in case of $\CC$ and $
\{\boldsymbol{i,j,k}\}$ in case of $\KK=\HH$.
\end{proof}

\begin{lemma}\label{lem:totalnrblocks}
    Let $\A=\KK_1\oplus\dots\oplus\KK_\ell$ be a real finite-dimensional pseudo-abelian $C^*$-algebra. Then, there exists finitely many smooth elements $A_1,\dots, A_s$ in $\mathcal A$ such that \begin{equation}\label{090909}|\{B^\bot;\;\; B\in\mathcal L_{\bigcap_{k=1}^sA_k^\bot}\setminus\{0\}\}|<\infty.\end{equation} If $s$ is minimal such number, then $\bigcap_{k=1}^s A_k^\bot=(\mathbb R\alpha_1,\dots, \mathbb R\alpha_\ell)$
         for some unimodular numbers $\alpha_k\in\mathbb K_k$ and the cardinality of the set in \eqref{090909} is equal to $\ell$, the number of matrix blocks in $\mathcal A$. Furthermore, \begin{equation}\label{dimformula}\dim\A=s+\ell.\end{equation}
\end{lemma}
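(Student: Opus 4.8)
The plan is to turn the problem into a coordinate-wise count of real directions. First I would use smoothness: by Lemma~\ref{smooth_one_block} each $A_k$ attains its norm on a single block, say the $i_k$-th, and there $A_k^\bot$ is the kernel of the real functional $X\mapsto\Re(\overline{\alpha^{(k)}}X_{i_k})$ for some $0\neq\alpha^{(k)}\in\KK_{i_k}$; this constrains only the $i_k$-th coordinate and leaves all others free. Consequently the intersection splits, $V:=\bigcap_{k=1}^sA_k^\bot=\bigoplus_{j=1}^\ell V_j$, where $V_j\subseteq\KK_j$ is the real subspace cut out by the $m_j:=|\{k;\ i_k=j\}|$ functionals charged to block $j$, so $\dim_\RR V_j\ge d_j-m_j$ and $\sum_jm_j=s$.

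The crucial observation is that every element $B$ of $V$ supported on a single block $j$ is left-symmetric relative to $V$. Using the identity $\Re(\overline{x}\,q\,x)=|x|^2\Re(q)$ valid for $x,q\in\KK_j$ (which also settles the quaternionic case), one checks that for $w\in V$ both the relation $B\perp w$ and the relation $w\perp B$ are controlled by the single scalar $\Re(\overline{b_j}w_j)$, where $b_j\in V_j$ is the nonzero entry of $B$; hence $B\perp w$ forces $w\perp B$. Moreover $B^\bot=\{X\in\A;\ \Re(\overline{b_j}X_j)=0\}$, and, exactly as in the proof of Lemma~\ref{lem:CCandHH}, the map $b\mapsto\Re(\overline{b}\,\cdot\,)$ is injective and $\RR$-linear, so two such neighbourhoods coincide precisely when their generators are real-proportional. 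Thus block $j$ contributes no neighbourhood if $V_j=0$, exactly one if $\dim_\RR V_j=1$, and infinitely many if $\dim_\RR V_j\ge2$.

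These contributions give the lower bound at once: finiteness of the set in \eqref{090909} forces $\dim_\RR V_j\le1$ for all $j$, hence $m_j\ge d_j-1$ and $s=\sum_jm_j\ge\sum_j(d_j-1)=\dim\A-\ell$. The bound is attained by mimicking Lemma~\ref{lem:CCandHH} block-by-block: for each non-real block place $d_j-1$ smooth elements there whose functionals are the imaginary units ($\boldsymbol i$ when $\KK_j=\CC$; $\boldsymbol i,\boldsymbol j,\boldsymbol k$ when $\KK_j=\HH$), collapsing $V_j$ to $\RR\cdot1$, and leave the real blocks untouched. With $s=\sum_j(d_j-1)$ this yields $V=(\RR\alpha_1,\dots,\RR\alpha_\ell)$ and a finite neighbourhood set, so the minimal admissible $s$ equals $\dim\A-\ell$, which is precisely \eqref{dimformula}.

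It remains to describe the minimal case. If $s$ is minimal then $m_j=d_j-1$ and $\dim_\RR V_j=1$ for every $j$, so $V=(\RR\alpha_1,\dots,\RR\alpha_\ell)$ with each $\alpha_j\in\KK_j$ normalised to be unimodular. The map $(t_1\alpha_1,\dots,t_\ell\alpha_\ell)\mapsto(t_1,\dots,t_\ell)$ is a linear isometry of $V$ onto $\RR^\ell$ with the maximum norm, so by Lemma~\ref{left} (applied to $\RR^\ell$ and transported back) $\mathcal L_V$ consists exactly of the single-block elements; by the count above these produce exactly $\ell$ outgoing neighbourhoods, pairwise distinct because they are supported on different blocks. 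Hence the cardinality in \eqref{090909} equals $\ell$. I expect the main obstacle to be the second paragraph: proving that single-block elements are left-symmetric relative to $V$ (the quaternionic inner-product bookkeeping) and, in the minimal case, that $\mathcal L_V$ contains nothing beyond them, so that the count is exactly $\ell$ rather than merely finite.
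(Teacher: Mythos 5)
Your proposal is correct, and it rests on the same skeleton as the paper's proof: Lemma~\ref{smooth_one_block} to reduce each smooth $A_k$ to a single-block real functional, the imaginary-unit construction ($\boldsymbol i$ for $\CC$, $\boldsymbol{i,j,k}$ for $\HH$) to show the bound is attained, and the injectivity of $b\mapsto\Re(\overline{b}\,\cdot)$ to show that any block of $\bigcap_k A_k^\bot$ of real dimension at least two produces infinitely many distinct outgoing neighbourhoods. Where you genuinely diverge is in how minimality is exploited. The paper first pins down the structure of $\bigcap_k A_k^\bot$ in the minimal case: it invokes Lemma~\ref{lem:CCandHH} blockwise to get $\bigcap_k A_k^\bot=\bigoplus_k\RR\alpha_k$ with each $\alpha_k$ unimodular or zero, then applies the $\sgno$-isometry to reduce to $\alpha_k\in\{0,1\}$, identifies the intersection with $\RR^m$ up to BJ isomorphism, and argues that minimality of $s$ forces $m=\ell$; only afterwards does it compute $s=c+3h$ (again via Lemma~\ref{lem:CCandHH}) and deduce \eqref{dimformula} by arithmetic. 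You instead run a single per-block dimension count: finiteness forces $\dim_\RR V_j\le 1$, hence $m_j\ge d_j-1$, hence $s=\sum_j m_j\ge\sum_j(d_j-1)=\dim\A-\ell$, with equality attained; in the minimal case all inequalities collapse to equalities, which simultaneously yields \eqref{dimformula} and the fact that every block survives with exactly one real line. Your route is more self-contained (it inlines the core of Lemma~\ref{lem:CCandHH} rather than citing it) and it makes rigorous, automatically, the step the paper treats tersely --- why minimality prevents any block from being annihilated; it also spells out the paper's parenthetical claim that single-block elements of the intersection are left-symmetric relative to it, including the quaternionic identity $\Re(\overline{x}qx)=|x|^2\Re(q)$ needed there. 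The paper's version, by leaning on the already-proved Lemma~\ref{lem:CCandHH} and the isometry trick, stays shorter but leaves more to the reader at exactly that point. Both arguments then finish identically: the minimal intersection is isometric to $\RR^\ell$ with the max norm, so Lemma~\ref{left} identifies its left-symmetric elements as the single-block ones, giving exactly $\ell$ distinct neighbourhoods.
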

\begin{proof}  In case of a real $C^*$-algebra $\A=\mathbb R^\ell$  we have, by Lemma \ref{dimvsleft}, $s=0$; clearly also $\dim\mathcal A=\ell$, which, again by  Lemma \ref{dimvsleft}, equals the cardinality of \eqref{090909},    and  the statement  follows by inserting $\alpha_k=1$. We now consider the remaining cases of a real $C^*$-algebra when one of the blocks is $\mathbb C$ or $\mathbb H$ with $\mathbb F=\mathbb R$. Without loss of generality, let $\A=\RR^r\oplus\CC^c\oplus\HH^h$ for some $r,c,h\ge0$. Now, if we take $A_k$'s to be all the elements in the finite set $\{(0^j,\mu,0^{\ell-j-1})
 ;\;\; r+1\leq j\leq \ell, \ \mu\in\{\boldsymbol{i,j,k}\}\cap\mathbb K_j\}$, then $A_k$ are smooth elements. It is straightforward that $(\alpha_1,\dots,\alpha_\ell)
 \in\bigcap_{k=1}^s A_k^\bot$ is possible only if all $\alpha_k\in\RR$, so $$\bigcap_{k=1}^s A_k^\bot=\RR^r\oplus(\mathbb R+0\boldsymbol{i})^c\oplus(\mathbb R+0\boldsymbol{i}+0\boldsymbol{j}+0\boldsymbol{k})^h.$$ By Lemma \ref{left}, all left-symmetric elements in $\bigcap_{k=1}^s A_k^\bot\setminus\{0\}$ are of the form 
 $(0^{j-1},\alpha,0^{\ell-j})$
 and $\alpha\in\RR\setminus\{0\}$. 
Note that $(0^{j-1},\alpha,0^{\ell-j})^\bot$
 equals $(0^{j-1},1,0^{\ell-j})^\bot$
 So, $$\{B^\bot;\;\; B\in\mathcal L_{\bigcap_{k=1}^sA_k^\bot}\setminus\{0\}\}=
\{B^\bot;\;\; B=(0^{j-1},1,0^{\ell-j})\text{ for }1\leq j\leq \ell\}
  ,$$ which is a finite set, and \eqref{090909} holds for some finite $s$.

Let $s$ be minimal such  and let $A_1,\dots, A_s\in\A$ be the corresponding smooth elements for which \eqref{090909} holds. Being smooth, Lemma~\ref{smooth_one_block} implies that  
$$A_i^\bot=(0^{j-1},\mu_{A_i},0^{\ell-j})^\bot$$ for some $j$ (depending on $i$) and a unimodular $\mu_{A_i}\in\KK_j$. Hence, $A_i^\bot$  is an $\RR$-vector subspace of $\A=\KK_1\oplus\dots\oplus\KK_\ell$ having only $j$-th block  different from $\KK_j$.  It implies that $\bigcap_{k=1}^s A_k^\bot$ is also an $\RR$-vector subspace of $\A$. Moreover,  by Lemma \ref{lem:CCandHH},  its  $j$-th block is at most one-dimensional real vector space, else $\bigcap_{k=1}^\ell A_k$ contains infinitely many left-symmetric elements with pairwise distinct outgoing neighborhoods (relative to $j$-th block $\mathcal M_1(\mathbb K_j)=\KK_j$, hence also relative to $\A$), which would  contradict the choice of $s$. 
Thus, $\bigcap_{k=1}^s A_k^\bot=\bigoplus_{k=1}^\ell \mathbb R\alpha_k$ for some  numbers $\alpha_k\in\mathbb K_k$ which we can assume to be either unimodular~or~$0$.

To finish, define a map $\sgno\colon\KK\to\KK$ by  $\sgno(\alpha)=1$ if $\alpha=0$, else $\sgno(\alpha)=\frac{\overline{\alpha}}{|\alpha|}$ and observe that  $x\rightarrow (\sgno(\alpha_1)\oplus\dots\oplus \sgno(\alpha_\ell))x$ is an isometry of $\A$, so induces a BJ isomorphism. With its help we can achieve that $\bigcap_{k=1}^s A_k^\bot=\bigoplus_{k=1}^\ell \RR \alpha_k$ with each $\alpha_k\in\{0,1\}$. This, in turn,  is BJ isomorphic to $\mathbb R^m$, where $m\leq \ell$ is the number of non-zero $\alpha_i$. But since $\mathbb R^\ell$ also contains finitely many left-symmetric elements with pairwise distinct outgoing neighborhoods then, by the minimality of $s$, we have $m=\ell$ and so $\alpha_k=1$ for each $k$. Thus, $\bigcap_{k=1}^s A_k^\bot=\bigoplus_{k=1}^\ell \mathbb R\alpha_k$ for some unimodular numbers $\alpha_k\in\mathbb K_k$ and it contains exactly $\ell$ non-zero left-symmetric elements with pairwise distinct outgoing neighborhoods.  This shows that the set in \eqref{090909} has cardinality  $\ell$. By Lemma \ref{lem:CCandHH}, we furthermore have \begin{equation}\label{eq:s=c+3h}s=c+ 3h.\end{equation} Now, the dimension of $\A$ is clearly equal to $$\dim_{\RR}\mathcal A=r+2c+4h.$$
while the number of blocks satisfies $\ell=r+c+h$. This implies $s=\dim\mathcal A-\ell.$
\end{proof}

We say that a subset $\mathcal S\subseteq\mathcal A$ has property $\mathcal{FL}$ if $|\{A^\bot; A\in\mathcal L_{\mathcal S}\}|<\infty.$

\begin{lemma}\label{lem:nr-real-block} Let $\A$ be a finite-dimensional pseudo-abelian $C^*$-algebra over $\mathbb R$ with $\dim\A\geq 2$. Choose the minimal integer $s$ and the corresponding smooth elements $A_1,\dots, A_s$ as in Lemma~\ref{lem:totalnrblocks}, and define the set
$$\Xi:=\{ A\in \mathcal L_{\A};\;\; \exists \,m \hbox{ such that } A^\bot\cap\bigcap_{k\neq m} A_k^\bot \text{ has property }\mathcal{FL}\}.$$
Then,
$$\Omega=\bigcap_{A\in\Xi}A^\perp$$
consists exactly of elements in $\A$ that are  zero in nonreal blocks of $\A$. Hence, the cardinality of
$$\{A^\perp;\;\;A\in\Omega\cap\mathcal L_{\A}\setminus\{0\} \}$$
coincides with the number of real $1$-by-$1$  blocks in $\A$.
\end{lemma}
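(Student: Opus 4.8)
The plan is to understand precisely what the set $\Xi$ captures and then show its joint outgoing neighborhood isolates the real blocks. Let me think carefully about the setup.

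We have $\A = \RR^r \oplus \CC^c \oplus \HH^h$ (up to BJ isomorphism). We've chosen minimal $s$ smooth elements $A_1, \dots, A_s$ with $\bigcap_k A_k^\bot = \bigoplus \RR\alpha_k$ for unimodular $\alpha_k$, and $s = c + 3h$. By the proof of Lemma~\ref{lem:totalnrblocks}, each $A_k$ is supported on a single nonreal block (one of the $\CC$ or $\HH$ summands), and collectively they "kill" the imaginary directions in each nonreal block. Specifically, for each complex block we have one smooth $A_k$ (corresponding to $\boldsymbol{i}$), and for each quaternionic block we have three (corresponding to $\boldsymbol{i,j,k}$).

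Now the key insight about $\Xi$. A left-symmetric element $A \in \mathcal{L}_\A$ is, by Lemma~\ref{left}, supported on a single block (one nonzero scalar entry). The condition is: there exists $m$ such that $A^\bot \cap \bigcap_{k\neq m} A_k^\bot$ has property $\mathcal{FL}$. The point is that removing $A_m$ from the intersection "frees up" one imaginary direction in the block that $A_m$ was constraining. If $A$ is supported on a real block, then $A^\bot$ imposes no useful constraint on the nonreal blocks, so even after dropping one $A_m$, the intersection still contains a full imaginary direction somewhere (giving infinitely many left-symmetric neighborhoods) — UNLESS dropping $A_m$ and intersecting with $A^\bot$ somehow compensates. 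Conversely, if $A$ is supported on a nonreal block, then $A^\bot$ (a real hyperplane in that block) can substitute for the dropped $A_m$ constraint, keeping property $\mathcal{FL}$.

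So here is the strategy. First I would establish the working normal form for $\A$ and recall that each $A_k$ is supported on a single nonreal block with $M_0^\ast$ equal to that block's imaginary-constraint. Then I would characterize membership in $\Xi$: I claim $A \in \Xi$ (for nonzero left-symmetric $A$) if and only if $A$ is supported on a nonreal block. The forward direction is the delicate one — I must show that if $A$ is supported on a \emph{real} block, then for every index $m$, the set $A^\bot \cap \bigcap_{k\neq m} A_k^\bot$ fails property $\mathcal{FL}$; the obstruction is that $A^\bot$ constrains only the real block of $A$, while dropping $A_m$ leaves one nonreal block with an uncontrolled $\RR^2$ or residual imaginary direction, producing infinitely many distinct left-symmetric outgoing neighborhoods there (exactly as in the final paragraph of Lemma~\ref{dimvsleft} and in Lemma~\ref{lem:CCandHH}). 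The reverse direction is easier: if $A$ is supported on a nonreal block, I choose $m$ to be (one of) the index(es) of the $A_k$ supported on that same block; then $A^\bot$ restricts that block to a real hyperplane, compensating for the dropped constraint, so the whole intersection again collapses to a (lower-dimensional) real space, which has property $\mathcal{FL}$ by Lemma~\ref{dimvsleft}.

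Once $\Xi$ is identified as exactly the nonzero left-symmetric elements supported on nonreal blocks, computing $\Omega = \bigcap_{A\in\Xi} A^\bot$ is a direct calculation. Each such $A = (0^{j-1},\alpha,0^{\ell-j})$ with $\alpha \in \KK_j \setminus \{0\}$ and $\KK_j$ nonreal contributes $A^\bot$ which is a real hyperplane inside the $j$-th block (the $\langle\cdot,\cdot\rangle_\RR$-orthogonal complement of $\alpha$), leaving the other blocks untouched. Since for each fixed nonreal block $j$ the element $\alpha$ ranges over all nonzero vectors of $\KK_j$, the intersection $\bigcap A^\bot$ over that block's contributions is $\{0\}$ in that block (as the only vector $\RR$-orthogonal to \emph{all} of $\CC$ or $\HH$ is $0$). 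Meanwhile the real blocks are never constrained, so they survive in full. Hence $\Omega = \RR^r \oplus 0^c \oplus 0^h$, i.e.\ exactly the elements vanishing on nonreal blocks. Finally, by Lemma~\ref{left} applied to the subalgebra $\Omega \cong \RR^r$ together with Lemma~\ref{dimvsleft} (or directly Lemma~\ref{maximal-Rn}), the set $\{A^\bot ;\; A \in \Omega \cap \mathcal{L}_\A \setminus \{0\}\}$ has cardinality equal to $\dim_\RR \Omega = r$, the number of real blocks, completing the proof. I expect the main obstacle to be the forward direction of the $\Xi$-characterization: carefully verifying that an $A$ supported on a real block can never make the $\mathcal{FL}$ property hold no matter which $A_m$ is dropped, which requires tracking that dropping exactly one constraint from a quaternionic block (three constraints) still leaves a genuine imaginary direction, and similarly for complex blocks.
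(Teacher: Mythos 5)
Your proposal is correct and follows essentially the same route as the paper: identify each $A_k$ (up to equality of outgoing neighborhoods) with a single-block element in a nonreal block, show via Lemma~\ref{lem:CCandHH} that any $A\in\Xi$ supported on a real block would leave an uncontrolled two-dimensional real subspace and violate property $\mathcal{FL}$, show that elements spanning each nonreal block do lie in $\Xi$ by dropping a suitable $A_m$, and then compute $\Omega=\RR^r\oplus 0^c\oplus 0^h$ and count. The only gloss is in your reverse direction for quaternionic blocks, where for an arbitrary nonzero $\alpha\in\HH$ not every choice of dropped index $m$ works (if $\alpha$ lies in the span of the two remaining $\mu$'s the intersection stays two-dimensional); one must note that some choice of $m$ always works --- though, as in the paper, the lemma really only needs a spanning set of such elements in each nonreal block, for which the verification is immediate.
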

\begin{proof} Let $\A=\mathbb R^r\oplus\CC^c\oplus\mathbb H^h$.
Using Lemma \ref{lem:totalnrblocks}, $$\bigcap_{k=1}^s A_k^\bot=\bigoplus_{k=1}^\ell\RR\alpha_k$$ for some unimodular numbers $\alpha_k\in\mathbb K_k$. Now, without loss of generality (by multiplying with a suitable unitary element,  i.e., applying a suitable  isometry), $\alpha_k=1$ for each $1\leq k\leq \ell$. Since $A_k$ are smooth there exist $X_k$ such that $A_k^\bot = X_k^\bot$, where collection of all $X_k$ takes the form $(0^{q-1},\boldsymbol{i},0^{\ell-q})$ in complex blocks, i.e., for $r+1\leq q\leq r+c$, and the form $(0^{q-1},\mu_{j,q},0^{\ell-q})$
 with $\mathrm{span}\{\mu_{1,q},\mu_{2,q},\mu_{3,q}\}=\mathrm{span}\{\boldsymbol{i,j,k}\}$ ($1\leq j\leq 3$) in quaternionic blocks, i.e., for $r+c+1\leq q\leq \ell$ (because we assumed $\alpha_k=1$). Without loss of generality, we assume $A_k=X_k$.

Consider $q$ such that $\mathbb K_q\in\{\CC,\mathbb H\}$. We examine only the slightly more challenging case of $\KK_q=\HH$ in the sequel. Then there exists $m$ such that $A_m=(0^{q-1},\mu_{1,q},0^{\ell-q})$.
 Let us  replace $A_m$ with $$A=(0^{q-1},1,0^{\ell-q}).$$ Now, for each unimodular $\mu\in\mathbb K_q$ we have $\mu^\bot=\mathop{\mathrm{ker}} f_\mu$ (here, $\mu^\bot$ denotes the relative outgoing neighborhood inside the $q$-th block ${\mathcal M}_1(\KK_q)=\KK_q$  and $f_\mu(x)=\mathrm{Re}(\overline{\mu}x)$ is $\mathbb R$-linear functional on $\mathbb K_q$) is a three-dimensional subspace of $\mathbb K_q$. Then, if $\mu_1,\mu_2,\mu_3$ are $\RR$-linearly independent, then $f_{\mu_1},f_{\mu_2},f_{\mu_3}$ are also $\RR$-linearly independent. Now, since $\mu_{1,q},\mu_{2,q}, \mu_{3,q}$ are purely imaginary and $\RR$-linearly independent, it implies that $1, \mu_{2,q}, \mu_{3,q}$ are $\RR$-linearly independent, so $1^\bot\cap \mu_{2,q}^\bot\cap\mu_{3,q}^\bot$ is a one-dimensional subspace in ${\mathcal M}_1(\KK_q)$ (again, we are using here the relative outgoing neighborhoods, within $q$-th block ${\mathcal M}_1(\KK_q)$ only). So, $A^\bot\cap\bigcap_{k\neq m}  A_k^\bot=\bigoplus_{k\neq q}\mathbb R\alpha_k\oplus \mathbb R\beta$ for some unimodular $\beta\in\mathbb K_q$ (because by replacing $A_m^\bot$ with $A^{\bot}$ affects the $q$-th block only), which contains only finitely many left-symmetric elements with pairwise distinct outgoing neighborhoods. It implies
$$(0^{q-1},1,0^{\ell-j})\,,\, (0^{q-1},\mu_{q,j},0^{\ell-j})\in\Xi\quad\text{ for }j\in\{1,2,3\}.$$
 Since $\mathrm{span}(1, \mu_{1,q},\mu_{2,q},\mu_{3,q})=\mathbb K_q$, then $\Omega=\bigcap_{A\in\Xi}A^\perp$ can only contain  elements with zero entries in $q$-th block. Since $\mathbb K_q$ was arbitrary non-real block, then $\Omega$ can only contain elements with zero entry in all non-real blocks of $\A$, that is,
$$\Omega\subseteq\RR^r\oplus 0^c\oplus 0^h.$$

Now we consider those $i$ such that $\mathbb K_i=\mathbb R$. By the minimality of $s$ and the fact that we could assume $A_k=X_k$, each $A_k$ has zero entries in real blocks. Therefore, if we replace some $A_m$ (which has a non-zero entry only in the $q$-th block) with a left-symmetric $A$, as outlined by the procedure, then  we claim  that $A$ also will have a non-zero entry in the $q$-th block: In case this block is complex, then by the minimality of $s$, $A_m$ is the only element among $\{A_1,\dots, A_s\}$  with non-zero entry in $q$-th block. Now, if its substitute, $A\in\Xi$ would have a zero entry in $q$-th block, then 
$(0^{q-1}\oplus {\mathcal M}_1(\CC)\oplus 0^{\ell-q})\subseteq A^\bot\cap \bigcap_{k\neq m} A_k^\bot$. 
 This, by Lemma~\ref{lem:CCandHH}, contradicts the fact that $ A^\bot\cap \bigcap_{k\neq m} A_k^\bot$ has property $\mathcal{FL}$.

 The arguments when the $q$-th block is quaternionic are similar; the only difference is that by the minimality of $s$ (and Lemma~\ref{lem:CCandHH}) we now have three matrices among $\{A_1,\dots,A_s\}$ with  non-zero entries in this block, and they must be $\RR$-linearly independent. Then the substitute, $A\in\Xi$ must again have non-zero entries only in $q$-th block.

Therefore. $\RR^r\oplus 0^c\oplus 0^h\subseteq A^\bot\cap \bigcap_{k\neq m} A_k^\bot$ for every $A\in\Xi$ and so $\RR^r\oplus 0^c\oplus 0^h\subseteq\Omega\subseteq \RR^r\oplus 0^c\oplus 0^h$, as claimed. The claim about the number of non-zero left symmetric elements with  property $\mathcal{FL}$ inside $\Omega$ is now clear.
\end{proof}

\section{Proofs of Main results}\label{sect:prfs}
\begin{proof}[Proof of Theorem \ref{leftsymmetric}]
 Without loss of generality, $\A=\bigoplus_{k=1}^\ell \mathcal M_{n_k}(\mathbb K_k)$ with $n_1=\dots=n_p=1$ and $n_{p+1},\dots, n_\ell\neq 1$ for some $p\geq 0$. If $p=0$, then, by Lemma~\ref{left}, $\mathcal L_\A=\{0\}$, so $\mathcal L_\A^\bot=\A$. This  matches with the sum of  minimal  ideals which are not skew-fields.  Also, $\mathcal L_\A^{\bot\bot}=\{0\}$, which agrees with the statement when there are no skew-field minimal ideals.

If $p\neq 0$, then, by  Lemma \ref{left},
$$\mathcal L_\A=\bigcup_{1\leq i\leq p}\big(\bigoplus_{k=1}^{i-1}0_{n_k}\big)\oplus \mathcal M_1(\KK_i)\oplus \big(\bigoplus_{k=i+1}^\ell 0_{n_k}\big).$$
Note that $\mathcal M_1(\KK_i)^\bot=\{0_{n_i}\}$ because $A_{i}\not\perp A_i$ for any non-zero $A_i\in\mathcal M_1(\KK_i)$. Thus, if $A=\alpha_1\oplus \dots\oplus \alpha_p \oplus A_{p+1}\oplus \dots\oplus A_\ell\in {\mathcal L_\A^\bot}$, then the left-symmetric element $\big(\bigoplus_{k=1}^{j-1}0_{n_k}\big)\oplus \alpha_j\oplus \big(\bigoplus_{k=j+1}^{\ell}0_{n_k}\big)$ 
is orthogonal to $A$; giving that $\alpha_j=0$ for all $1\leq j\leq p$. Hence,
$$\mathcal L_\A^\bot:=\bigcap_{A\in\mathcal L_\A}A^\bot= \big(\bigoplus_{k=1}^p0_{n_k}\big)\oplus\bigoplus_{k=p+1}^\ell \mathcal M_{n_k}(\KK_k),$$
which coincides with the sum of  minimal ideals that are not skew-fields.  Moreover,
 $$\mathcal L_\A^{\bot\bot}:=\bigcap_{A\in \mathcal L_\A^\bot} A^\bot=\bigoplus_{k=1}^p\mathcal M_1(\KK_k)\oplus\big(\bigoplus_{k=p+1}^\ell0_{n_k}\big),$$ which is the sum of skew-field minimal ideals. \end{proof}

 \begin{proof}[Proof of Theorem \ref{abelian}] Assume there is a BJ isomorphism between $\A$ and $\A'$ and $\A$ is a finite-dimensional pseudo-abelian $C^*$-algebra. If $\phi$ is a BJ isomorphism between $\A$ and $\A'$, we get $\mathcal L_{\A'}=\phi(\mathcal L_\A)$, $\mathcal R_{\A'}=\phi(\mathcal R_\A)$ and $\phi(0)=0$ (since $x=0$ is the only element with $x\perp x$). Using Corollary \ref{abeliancharacterization}, we get $\A'$ is pseudo-abelian. Using BJ isomorphism of $\A$ and $\A'$, we have that the cardinalities of $\{A^\bot;\;A\in\mathcal B \text{ is left-symmetric}\}$ and $\{A^\bot;\;A\in\mathcal B \text{ is right-symmetric}\}$ are same for $\mathcal B\in\{\A,\A'\}$. Then, using Corollary \ref{lem:Rn-vs-Cn}, we get $\FF=\FF'$. For $\FF=\CC$, then result follows because the dimensions of $\A$ and $\A'$ are same using \eqref{dimvslefteq}. Now, we consider the case $\FF=\FF'=\RR$.

Let $\A=\RR^r\oplus\CC^c\oplus\HH^h$ and $\A'=\RR^{r'}\oplus\CC^{c'}\oplus\HH^{h'}$. Then, by Lemmas~\ref{smoothchar} and \ref{lem:nr-real-block} we have $$r=r'.$$
By Lemma \ref{lem:totalnrblocks}, we also have that the number of blocks in $\A$ and $\A'$ are same, i.e., $$r+c+h=r'+c'+h'.$$ The minimal number $s$ such that there exists smooth elements $A_1,\dots, A_s$ for which \eqref{090909} holds is also preserved under a BJ isomorphism. Hence, by \eqref{dimformula},  the dimensions of $\A$ and $\A'$ are same,~so $$r+2c+4h=r'+2c'+4h'.$$ It implies, $r=r', c=c'$ and $h=h'$. Consequently, $\A$ and $\A'$ are $C^\ast$-isomorphic.
\end{proof}
 \section{Extraction of abelian summand}\label{section4}

Recall from Theorem \ref{leftsymmetric} that, given a finite-dimensional $C^*$-algebra $\A$, 
\begin{align*}
   \mathcal L_{\A}^{\bot\bot}&=\mathbb C^c;\quad\quad\hbox{ if } \FF=\CC, \\ 
   \intertext{or}
   \mathcal L_{\A}^{\bot\bot}&=\mathbb R^r\oplus\mathbb C^c\oplus\mathbb H^h;\quad \hbox{ if } \mathbb F=\mathbb R,
\end{align*}
 where $r,c,h$ are the numbers of $1$-by-$1$ real, complex and quaternionic blocks in the matrix block decomposition, respectively. Notice that in case of complex $C^*$-algebra, its pseudo-abelian and abelian summands coincide and equal to $\mathbb C^c$. However, for a real $C^*$- algebra its abelian summand equals $\RR^r\oplus \CC^c$ and differs from the pseudo-abelian summand when  $h>0$. Now, we give a procedure to classify the abelian summand in case of real $C^*$-algebra. Recall that $\FF$, the underlying field of $\A$, can be determined using BJ orthogonality alone when $\dim\mathcal L_{\A}^{\bot\bot} \geq 2$, see Corollary \ref{lem:Rn-vs-Cn} and note that the dimension of the pseudo-abelian $C^*$-algebra $L_{\A}^{\bot\bot}$ can be computed using \eqref{dimformula} (this was proven for real $C^*$-algebra but it holds even for complex $C^*$-algebra since then $s=0$ and  \eqref{dimformula} reduces to \eqref{dimvslefteq}).  
   When $\dim\mathcal L_{\A}^{\bot\bot} = 1$, then the pseudo-abelian and abelian summand of $\A$ coincides and equal to $\mathbb C$ (if $\FF=\CC$) or $\mathbb R$ (if $\FF=\RR$), respectively. 
 
 Clearly, to extract the abelian summand we only need to work within the pseudo-abelian summand $\mathcal L_{\A}^{\bot\bot}$ and we only need to consider real $C^*$-algebras, that is,  $\FF=\RR$. We will require the smooth points in $\mathcal L_{\A}^{\bot\bot}$. Since $\mathcal L_{\A}^{\bot\bot}$ is a $C^*$-algebra, its smooth points can be described by BJ orthogonality alone, see Lemma \ref{smoothchar}. We will also require a property similar to the property $\mathcal{FL}$, which was defined just prior Lemma~\ref{lem:nr-real-block}. We say that a subset $\mathcal S\subseteq \mathcal L_{\A}^{\bot\bot}$ has a relative property $\mathcal{FL}$ with respect to pseudo-abelian summand, $\mathcal{PFL}$ for short, if $$|\{X^\bot\cap \mathcal L_{\A}^{\bot\bot};\;\;X\in \mathcal L_{\mathcal S}\}|<\infty.$$ 

Now, we consider the following procedure to extract quaternionic $1$-by-$1$ blocks in $\mathcal L_{\A}^{\bot\bot}$: Start with an arbitrary finite set $\Omega\subseteq\mathcal{L}_\A^{\bot\bot}$, with property $\mathcal{PFL}$, that consists of smooth points relative to $\mathcal{L}_\A^{\bot\bot}$ and is of minimal possible cardinality (it exists by Lemma \ref{lem:totalnrblocks}). By Lemma~\ref{M_0(A)} we replace every element $S\in\Omega$ by  $\hat{S}\in{\mathcal L}_\A^{\bot\bot}$, which is left symmetric relative to ${\mathcal L}_\A^{\bot\bot}$ and satisfies $S^\bot\cap {\mathcal L}_\A^{\bot\bot}=\hat{S}^\bot\cap {\mathcal L}_\A^{\bot\bot}$. This way we achieve that each element of $\Omega$ is left-symmetric relative to ${\mathcal L}_\A^{\bot\bot}$ and, as such, belongs to a single block of ${\mathcal L}_\A^{\bot\bot}$ (see Lemma~\ref{left}(ii))   It follows from the proof of equation \eqref{eq:s=c+3h} in Lemma \ref{lem:totalnrblocks} that, due to  its minimal cardinality,  $|\Omega|=c+3h$ and no element in $\Omega$  belongs to a real block of $\mathcal{L}_\A^{\bot\bot}$, while  each complex block of $\mathcal{L}_\A^{\bot\bot}$ has one and each quaternionic block of $\mathcal{L}_\A^{\bot\bot}$ has three representatives in $\Omega$. 
 
 Thus, if $|\Omega|\le 2$, then ${\mathcal L}_\A^{\bot\bot}$ is the abelian part (and $\A$ is abelian if and only if $\A={\mathcal L}_\A^{\bot\bot}$). If $|\Omega|\ge 3$, let $\Omega'$ be the collection of all $3$-subsets (i.e., subsets of cardinality $3$) of $\Omega$. For each $\{X_1,X_2,X_3\}\in\Omega'$ we select (if they  exist) all  non-zero $X\in  {\mathcal L}_\A^{\bot\bot}$, left-symmetric relative to ${\mathcal L}_\A^{\bot\bot}$, such that  the three sets
\begin{equation}\label{eq:abalianaxtracton}
\bigcap_{S\in\Omega\setminus\{X_1\}}S^\bot\cap X^
\bot\cap{\mathcal L}_\A^{\bot\bot},\qquad \bigcap_{S\in\Omega\setminus\{X_2\}}S^\bot\cap X^
\bot\cap{\mathcal L}_\A^{\bot\bot},\qquad \bigcap_{S\in\Omega\setminus\{X_3\}}S^\bot\cap X^
\bot\cap{\mathcal L}_\A^{\bot\bot}
\end{equation}
have property $\mathcal{PFL}$.  
 By Lemma~\ref{lem:CCandHH}  every quaternionic $1$-by-$1$ block contains such a representative triple  in $\Omega$ (e.g., if $\{X_1,X_2,X_3\}=(0\oplus \{\boldsymbol{i,j,k}\}\oplus0) \subseteq0\oplus \HH\oplus0$;  we can take $X=(0\oplus 1\oplus 0)\in0\oplus \HH\oplus 0$). Conversely if, for a triple $\{X_1,X_2,X_3\}\subseteq\Omega$, we can find such a left-symmetric $X$, then, by  Lemma \ref{left}(ii), $X$ belongs to a single block of ${\mathcal L}_\A^{\bot\bot}$. It is then easy to see that if  $\{X_1,X_2,X_3\}$ do not belong to the same block (necessarily quternionic), then, by the minimal cardinality of $\Omega$, at least one of the  three sets in \eqref{eq:abalianaxtracton}  will not have the property $\mathcal{PFL}$, a contradiction. 

One also sees that each $X$ must belong to the same quaternionic block containing $X_1,X_2,X_3$ and at least  one, say $X_0$,  is not in their $\RR$-linear span. Then, $X_1^\bot \cap X_2^\bot\cap X_3^\bot\cap X_0^\bot$ vanishes on this quaternionic block.
Therefore, the common outgoing neighborhood  of all those triples, together with all the adjourned vertices $X$, and intersected by ${\mathcal L}_\A^{\bot\bot}$, is the abelian summand of $\A$.

To summarize the complete extraction of abelian summand:  Start with $\mathcal{L}_\A^{\bot\bot}$. If $\dim\mathcal{L}_\A^{\bot\bot}=1$ (c.f.~\eqref{dimformula}) or if $\dim\mathcal{L}_\A^{\bot\bot}\ge 2$ and $\FF=\CC$ (see Corollary \ref{lem:Rn-vs-Cn}), then $\mathcal{L}_\A^{\bot\bot}$ is the abelian summand. Otherwise, apply the above procedure to get it.

\begin{corollary}
    Let $\A$ be a finite-dimensional $C^*$-algebra over $\FF$. Then, the following are equivalent:
    \begin{enumerate}
        \item[(i)] $\A$ is abelian. 
        \item[(ii)] $\A=\mathcal L_\A^{\bot\bot}$ and it contains no quaterninic blocks.
        \item[(iii)] $\A=\mathcal L_\A^{\bot\bot}$ and if $A_1,\dots, A_s\in\A$ is any (hence every) minimal tuple with the property ${\mathcal FL}$, then it contains no triple for which a  non-zero left symmetric $X$ would exist so that  \eqref{eq:abalianaxtracton} would have property ${\mathcal FL}$.
    \end{enumerate}
\end{corollary}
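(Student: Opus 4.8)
The plan is to prove the cyclic chain (i) $\Leftrightarrow$ (ii) $\Leftrightarrow$ (iii), drawing on the structural results already established and on the extraction procedure described immediately above. First I would dispose of (i) $\Leftrightarrow$ (ii), which is essentially a matter of definitions together with Corollary~\ref{abeliancharacterization}. A finite-dimensional $C^*$-algebra is commutative precisely when every block in its matrix decomposition \eqref{eq:C*(complex)}--\eqref{eq:C*(real)} is a commutative field, that is, a $1$-by-$1$ block over $\RR$ or $\CC$; quaternionic $1$-by-$1$ blocks and all blocks of size greater than one are non-commutative. By Corollary~\ref{abeliancharacterization}, the condition $\A=\mathcal L_\A^{\bot\bot}$ is equivalent to $\A$ being pseudo-abelian, i.e.\ to all blocks being $1$-by-$1$ over $\RR$, $\CC$ or $\HH$. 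Adjoining the requirement that there be no quaternionic blocks leaves exactly the $1$-by-$1$ blocks over $\RR$ or $\CC$, which is precisely the condition for $\A$ to be abelian. Hence (i) $\Leftrightarrow$ (ii).

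For (ii) $\Leftrightarrow$ (iii) I would observe that both conditions carry the hypothesis $\A=\mathcal L_\A^{\bot\bot}$, so I may assume throughout that $\A$ is pseudo-abelian. In this situation $\mathcal L_\A^{\bot\bot}=\A$, the relative property $\mathcal{PFL}$ and the property $\mathcal{FL}$ coincide, and the extraction procedure developed just before the corollary applies verbatim with $\mathcal L_\A^{\bot\bot}$ replaced by $\A$. Writing $\A=\RR^r\oplus\CC^c\oplus\HH^h$, Lemma~\ref{lem:totalnrblocks} furnishes a minimal tuple $A_1,\dots,A_s$ of smooth elements with the property $\mathcal{FL}$, and after replacing each $A_k$ by a left-symmetric element with the same outgoing neighborhood (via Lemma~\ref{M_0(A)} and Lemma~\ref{left}(ii)) these representatives distribute over the blocks so that $s=c+3h$, with one representative in each complex block, three in each quaternionic block, and none in a real block. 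The task then reduces to showing that the triple-detection clause in (iii) flags a block if and only if that block is quaternionic.

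The heart of the argument, and the step I expect to be the main obstacle, is the correctness of this detection, which is exactly the content of the two paragraphs preceding the corollary. If a quaternionic block is present, then by Lemma~\ref{lem:CCandHH} it supplies a triple $\{X_1,X_2,X_3\}$ of representatives together with a non-zero left-symmetric $X$ (for instance the block's identity) for which each of the three intersections in \eqref{eq:abalianaxtracton} retains the property $\mathcal{FL}$; thus a detecting triple exists and (iii) fails. Conversely, suppose some triple $\{X_1,X_2,X_3\}$ admits such an $X$. By Lemma~\ref{left}(ii) the element $X$ lives in a single $1$-by-$1$ block, and I would argue, using the minimal cardinality of the tuple together with Lemma~\ref{lem:CCandHH}, that $X_1,X_2,X_3$ must all lie in that same block and that the block cannot be real or complex: a real block contributes no representative and a complex block only one, so deleting one element of the tuple and adjoining a single $X$ would leave such a block uncut, producing infinitely many left-symmetric outgoing neighborhoods and destroying the $\mathcal{FL}$ property of at least one set in \eqref{eq:abalianaxtracton}. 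Hence the block must be quaternionic, and a detecting triple exists precisely when $h>0$. Therefore (iii), the absence of a detecting triple, is equivalent to $h=0$, the absence of quaternionic blocks, which is (ii). Combining the two equivalences completes the proof.
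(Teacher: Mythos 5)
Your proposal is correct and takes essentially the same approach as the paper: the corollary is justified there by the extraction procedure immediately preceding it, and your argument reproduces that reasoning---(i)$\Leftrightarrow$(ii) via Corollary~\ref{abeliancharacterization} together with the block decomposition, and (ii)$\Leftrightarrow$(iii) by specializing the triple-detection argument (Lemmas~\ref{left}, \ref{lem:CCandHH}, \ref{lem:totalnrblocks} and the discussion around \eqref{eq:abalianaxtracton}) to the case $\A=\mathcal L_\A^{\bot\bot}$, where $\mathcal{PFL}$ and $\mathcal{FL}$ coincide. Both directions of the detection step (a quaternionic block supplies a detecting triple; conversely, minimality of the tuple forces any detecting triple and its witness $X$ into a single, necessarily quaternionic, block) match the paper's argument.
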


\bigskip
\noindent
{\bf \Large Acknowledgment}. The authors are indebted to Professor  R. Tanaka for providing them with references \cite{Li03} and \cite{Rosenberg}.

The research of Bojan Kuzma is
supported in part by the Slovenian Research Agency (research program P1-0285 and research projects N1-0210, N1-0296 and J1-50000). The research of Sushil Singla is supported in part by Slovenian Research Agency (research project N1-0210),  and JCB/2021/000041 of SERB, India.

\end{document}